\theoremstyle{plain}
\newtheorem{thm}{Theorem}[section]
\newtheorem{lem}[thm]{Lemma}
\newtheorem{cor}[thm]{Corollary}
\theoremstyle{definition}
\newtheorem{defn}[thm]{Definition}
\newtheorem{example}[thm]{Example}
\theoremstyle{remark}
\newtheorem{rem}{Remark}
\begin{document}


\title{On the Exact Linearization and Control of Flat Discrete-time Systems}

\author{
\name{Bernd Kolar\textsuperscript{a}\thanks{CONTACT Bernd Kolar. Email: bernd\underline{ }kolar@ifac-mail.org\\ The second and the third author have been supported by the Austrian Science Fund (FWF) under grant number P 32151.}, Johannes Diwold\textsuperscript{b}, Conrad Gstöttner\textsuperscript{b}, and Markus Schöberl\textsuperscript{b}}
\affil{\textsuperscript{a}Magna Powertrain Engineering Center Steyr GmbH \& Co KG, St. Valentin, Austria;
\textsuperscript{b}Institute of Automatic Control and Control Systems Technology, Johannes Kepler
University, Linz, Austria}
}

\maketitle

\begin{abstract}
The paper addresses the exact linearization of flat nonlinear discrete-time systems by generalized static or dynamic feedbacks which may also depend on forward-shifts of the new input. We first investigate the question which forward-shifts of a given flat output can be chosen in principle as a new input, and subsequently how to actually introduce the new input by a suitable feedback. With respect to the choice of a feasible input, easily verifiable conditions are derived. Introducing such a new input requires a feedback which may in general depend not only on this new input itself but also on its forward-shifts. This is similar to the continuous-time case, where feedbacks which depend on time derivatives of the closed-loop input -- and in particular quasi-static ones -- have already been used successfully for the exact linearization of flat systems since the nineties of the last century. For systems with a flat output that does not depend on forward-shifts of the input, it is shown how to systematically construct a new input such that the total number of the corresponding forward-shifts of the flat output is minimal. Furthermore, it is shown that in this case the calculation of a linearizing feedback is particularly simple, and the subsequent design of a discrete-time flatness-based tracking control is discussed. The presented theory is illustrated by the discretized models of a wheeled mobile robot and a 3DOF helicopter.
\end{abstract}

\begin{keywords}
discrete-time flatness; nonlinear control; feedback linearization; differential-geometric methods
\end{keywords}

\section{Introduction}

The concept of flatness has been introduced in the 1990s by Fliess,
L\'{e}vine, Martin and Rouchon for nonlinear continuous-time systems
(see e.g. \cite{FliessLevineMartinRouchon:1995} and \cite{FliessLevineMartinRouchon:1999}).
Roughly speaking, a continuous-time system is flat if all system variables
can be parameterized by a flat output and its time derivatives, which
in turn depends on the system variables and their time derivatives.
In other words, there exists a one-to-one correspondence between the
trajectories of a flat system and the trajectories of a trivial system.
Since these properties allow an elegant solution to motion planning
problems and a systematic design of tracking controllers, flatness
belongs doubtlessly to the most popular nonlinear control concepts.

For the practical implementation of a flatness-based control, it is
important to evaluate the continuous-time control law at a sufficiently
high sampling rate. If this is not possible, it can be advantageous
to design the controller directly for a suitable discretization of
the continuous-time system, see e.g. \cite{DiwoldKolarSchoberl:2022}.
Thus, transferring the flatness concept to discrete-time systems is
both interesting from a theoretical and an application point of view.
In fact, there are two equally obvious possibilities: The first one
is to replace the time derivatives of the continuous-time definition
by forward-shifts. This point of view has been adopted e.g. in \cite{KaldmaeKotta:2013},
\cite{Sira-RamirezAgrawal:2004}, or \cite{KolarKaldmaeSchoberlKottaSchlacher:2016},
and is consistent with the linearizability by a discrete-time endogenous
dynamic feedback as it is defined in \cite{Aranda-BricaireMoog:2008}.
The second approach is based on the one-to-one correspondence of the
system trajectories to the trajectories of a trivial system. In contrast
to the first approach, in this case the flat output may also depend
on backward-shifts of the system variables, see \cite{DiwoldKolarSchoberl:2020}
or \cite{GuillotMillerioux:2020}. Since it is more general, within
the present paper we consider the second approach, and refer to the
first one, which is included as a special case, as forward-flatness.

For flat continuous-time systems, the problem of exact linearization
by different types of feedback as well as a subsequent tracking control
design have already been studied extensively since the nineties of
the last century. The simplest approach is to perform an exact linearization
such that the new input corresponds to the highest time derivatives
of the flat output that appear in the parameterization of the system
variables. This can always be achieved by a classical (endogenous)
dynamic feedback, see e.g. \cite{FliessLevineMartinRouchon:1999}.
However, it is well-known that it can be advantageous to choose appropriate
lower-order time derivatives of the flat output as a new input. In
this case, the required feedback depends also on time derivatives
of the new input, and is hence not a dynamic feedback in the classical
sense -- see \cite{DelaleauRudolph:1998}, \cite{Rudolph:2021},
or \cite{GstoettnerKolarSchoberl:2021}. An important advantage of
such an exact linearization is that it allows e.g. to achieve a lower-order
error dynamics for a subsequently designed tracking control.

For flat discrete-time systems, it has been shown in \cite{DiwoldKolarSchoberl:2020}
that -- similar to the continuous-time case -- they can be exactly
linearized by a dynamic feedback such that the new input of the closed-loop
system corresponds to the highest forward-shifts of the flat output
that appear in the parameterization of the system variables. Furthermore,
it has been demonstrated in \cite{DiwoldKolarSchoberl:2022} for the
practical example of a gantry crane that also an exact linearization
which introduces lower-order forward-shifts of the flat output as
a new input is in principle applicable. This allows to achieve a lower-order
tracking error dynamics, but requires a feedback which depends also
on forward-shifts of the new input. Since in the particular case of
the gantry crane the required feedback does not have a dynamic part,
it is a discrete-time quasi-static feedback -- see e.g. \cite{ArandaKotta:2001},
where such a type of feedback is used in a different context. However,
according to the authors' best knowledge, a thorough theoretical analysis
of the problem of exact linearization and tracking control design
for flat discrete-time systems does not yet exist in the literature.
Thus, the aim of the present paper is to close this gap: We investigate
which (lower-order) forward-shifts of a flat output can be used in
principle as a new input, and how to actually introduce such a new
input by a suitable feedback. After discussing the general case, we
consider systems with a flat output that does not depend on future
values of the control input. We show how to systematically construct
a new input such that the sum of the orders of the corresponding forward-shifts
of the components of the flat output is minimal, and that deriving
a linearizing feedback is particularly simple in this case. Furthermore,
we show how such an exact linearization can be used as a basis for
the systematic design of a flatness-based tracking control, and demonstrate
our results by two examples.

The employed mathematical methods are similar as in \cite{GstoettnerKolarSchoberl:2021},
where we have proven that every flat continuous-time system with a
flat output that is independent of time derivatives of the input can
be exactly linearized by a quasi-static feedback of its classical
state. Hence, it is particularly important to emphasize that like
in \cite{GstoettnerKolarSchoberl:2021} we restrict ourselves to feedbacks
of the classical system state, and do not consider feedbacks of generalized
states as e.g. in \cite{DelaleauRudolph:1998} or \cite{Rudolph:2021}.
However, it should be noted that the considered feedbacks are more
general than a usual static or dynamic feedback in the sense that
they may also depend on forward-shifts of the closed-loop input.

The paper is organized as follows: In Section \ref{sec:notation}
and Section \ref{sec:discrete_time_flatness} we introduce some notation
and recapitulate the concept of flatness for discrete-time systems.
In Section \ref{sec:exact_linearization} we investigate the exact
linearization of flat discrete-time systems in general, and derive
further results for systems with a flat output that does not depend
on future values of the control input. The design of a flatness-based
tracking control is discussed in Section \ref{sec:tracking_control},
and in Section \ref{sec:example} the developed theory is finally
applied to the discretized models of a wheeled mobile robot and a
3DOF helicopter.

\section{\label{sec:notation}Notation}

Throughout the paper we make use of some basic differential-geometric
concepts. Let $\mathcal{X}$ be an $n$-dimensional smooth manifold
equipped with local coordinates $x^{i}$, $i=1,\ldots,n$, and $h:\mathcal{X}\rightarrow\mathbb{R}^{m}$
some smooth function. Then we denote by $\partial_{x}h$ the $m\times n$
Jacobian matrix of $h=(h^{1},\ldots,h^{m})$ with respect to $x=(x^{1},\ldots,x^{n})$.
The partial derivative of a single component $h^{j}$ with respect
to a coordinate $x^{i}$ is denoted by $\partial_{x^{i}}h^{j}$. Furthermore,
$\mathrm{d}h^{j}=\partial_{x^{1}}h^{j}\mathrm{d}x^{1}+\ldots+\partial_{x^{n}}h^{j}\mathrm{d}x^{n}$
denotes the differential (exterior derivative) of the function $h^{j}$,
where $\mathrm{d}x^{i}$, $i=1,\ldots,n$ are the differentials corresponding
to the local coordinates. We frequently use $\mathrm{d}h$ as an abbreviation
for the set $\{\mathrm{d}h^{1},\ldots,\mathrm{d}h^{m}\}$, and with
e.g. $\mathrm{span}\{\mathrm{d}h^{1},\ldots,\mathrm{d}h^{m}\}$ we
mean the span over the ring $C^{\infty}(\mathcal{X})$ of smooth functions.
The symbols $\subset$ and $\supset$ are used in the sense that they
also include equality.

To denote forward- and backward-shifts of the system variables, we
use subscripts in brackets. For instance, the $\alpha$-th forward-
or backward-shift of a component $y^{j}$ of a flat output $y$ with
$\alpha\in\mathbb{Z}$ is denoted by $y_{[\alpha]}^{j}$, and $y_{[\alpha]}=(y_{[\alpha]}^{1},\ldots,y_{[\alpha]}^{m})$.
To keep expressions which depend on different numbers of shifts of
different components of a flat output readable, we use multi-indices.
If $A=(a^{1},\ldots,a^{m})$ and $B=(b^{1},\ldots,b^{m})$ are two
multi-indices with $A\leq B$, i.e., $a^{j}\leq b^{j}$ for $j=1,\ldots,m$,
then 
\[
y_{[A]}=(y_{[a^{1}]}^{1},\ldots,y_{[a^{m}]}^{m})
\]
and
\[
y_{[A,B]}=(y_{[a^{1},b^{1}]}^{1},\ldots,y_{[a^{m},b^{m}]}^{m})
\]
with $y_{[a^{j},b^{j}]}^{j}=(y_{[a^{j}]}^{j},\ldots,y_{[b^{j}]}^{j})$.
In the case $a^{j}>b^{j}$ we define $y_{[a^{j},b^{j}]}^{j}$ as empty.
The addition and subtraction of multi-indices is performed componentwise,
and for an integer $c$ we define $A\pm c=(a^{1}\pm c,\ldots,a^{m}\pm c)$.
Furthermore, $\#A=\sum_{j=1}^{m}a^{j}$ denotes the sum over all components
of a multi-index. As an example consider the tuple $y=(y^{1},y^{2})$,
an integer $c=2$, and multi-indices $A=(0,2)$, $B=(1,2)$. We then
have $y_{[c]}=(y_{[2]}^{1},y_{[2]}^{2})$, $y_{[A]}=(y^{1},y_{[2]}^{2})$,
$y_{[A,B]}=(y^{1},y_{[1]}^{1},y_{[2]}^{2})$, and $y_{[A+c]}=(y_{[2]}^{1},y_{[4]}^{2})$
as well as $\#A=2$ and $\#B=3$.

Frequently, it is also convenient to decompose the components of a
flat output $y$ or the input $u$ into several blocks like e.g.
\[
y=(\underbrace{y^{1},\ldots,y^{m_{1}}}_{y_{1}},\underbrace{y^{m_{1}+1},\ldots,y^{m}}_{y_{2}})\,.
\]
Since such blocks are also denoted by a subscript, in this case the
first subscript always refers to the block, and shifts are denoted
by a second subscript in brackets. For instance,
\begin{align*}
y_{[\alpha]} & =(\underbrace{y_{[\alpha]}^{1},\ldots,y_{[\alpha]}^{m_{1}}}_{y_{1,[\alpha]}},\underbrace{y_{[\alpha]}^{m_{1}+1},\ldots,y_{[\alpha]}^{m}}_{y_{2,[\alpha]}})
\end{align*}
with some integer $\alpha\in\mathbb{Z}$, or $y_{1,[A_{1}]}=(y_{1,[a_{1}^{1}]}^{1},\ldots,y_{1,[a_{1}^{m_{1}}]}^{m_{1}})$
with some multi-index $A_{1}=(a_{1}^{1},\ldots,a_{1}^{m_{1}})$.

\section{\label{sec:discrete_time_flatness}Discrete-time systems and flatness}

In this contribution, we consider time-invariant discrete-time nonlinear
systems 
\begin{equation}
x^{i,+}=f^{i}(x,u)\,,\quad i=1,\dots,n\label{eq:sys}
\end{equation}
with $\dim(x)=n$, $\dim(u)=m$ and smooth functions $f^{i}(x,u)$.
In addition, we assume that the system (\ref{eq:sys}) meets the submersivity
condition
\begin{equation}
\mathrm{rank}(\partial_{(x,u)}f)=n\,,\label{eq:submersivity}
\end{equation}
which is common in the discrete-time literature and necessary for
accessibility.

Like in \cite{DiwoldKolarSchoberl:2020}, we call a discrete-time
system (\ref{eq:sys}) flat if there exists a one-to-one correspondence
between its solutions $(x(k),u(k))$ and solutions $y(k)$ of a trivial
system (arbitrary trajectories that need not satisfy any difference
equation) with the same number of inputs. Before we state a more rigorous
definition of discrete-time flatness, let us consider the coupling
of the trajectories $x(k)$ and $u(k)$ by the system equations (\ref{eq:sys}).
By a repeated application of (\ref{eq:sys}), all forward-shifts $x(k+\alpha)$,
$\alpha\geq1$ of the state variables are obviously determined by
$x(k)$ and the input trajectory $u(k+\alpha)$ for $\alpha\geq0$.
A similar argument holds for the backward-direction: Because of the
submersivity condition (\ref{eq:submersivity}), there always exist
$m$ functions $g(x,u)$ such that the $(n+m)\times(n+m)$ Jacobian
matrix
\[
\begin{bmatrix}\partial_{x}f & \partial_{u}f\\
\partial_{x}g & \partial_{u}g
\end{bmatrix}
\]
is regular and the map
\begin{equation}
\begin{array}{ccc}
x^{+} & = & f(x,u)\\
\zeta & = & g(x,u)
\end{array}\label{eq:sys_extension}
\end{equation}
hence locally invertible. By a repeated application of its inverse
\begin{equation}
(x,u)=\psi(x^{+},\zeta)\,,\label{eq:sys_extension_inverse}
\end{equation}
all backward-shifts $x(k-\beta)$, $u(k-\beta)$ of the state- and
input variables for $\beta\geq1$ are determined by $x(k)$ and backward-shifts
$\zeta(k-\beta)$, $\beta\geq1$ of the system variables $\zeta$
defined by (\ref{eq:sys_extension}). Hence, if only a finite time-interval
is considered, the system trajectories $(x(k),u(k))$ can be identified
with points of a manifold $\mathcal{\zeta}_{[-l_{\zeta},-1]}\times\mathcal{X}\times\mathcal{U}_{[0,l_{u}]}$
with coordinates $(\zeta_{[-l_{\zeta}]},\ldots,\zeta_{[-1]},x,u,u_{[1]},\dots,u_{[l_{u}]})$
and suitably chosen integers $l_{\zeta}$, $l_{u}$. If $h\in C^{\infty}(\mathcal{\zeta}_{[-l_{\zeta},-1]}\times\mathcal{X}\times\mathcal{U}_{[0,l_{u}]})$
is a function defined on this manifold, then its future values can
be determined by a repeated application $\delta^{\beta}$ of the forward-shift
operator
\begin{equation}
\delta(h(\dots,\zeta_{[-2]},\zeta_{[-1]},x,u,u_{[1]},\dots))=h(\dots,\zeta_{[-1]},g(x,u),f(x,u),u_{[1]},u_{[2]},\dots)\,.\label{eq:forward_shift_operator}
\end{equation}
Likewise, its past values can be determined by a repeated application
$\delta^{-\beta}$ of the backward-shift operator
\begin{equation}
\delta^{-1}(h(\dots,\zeta_{[-1]},x,u,u_{[1]},u_{[2]},\dots))=h(\dots,\zeta_{[-2]},\psi_{x}(x,\zeta_{[-1]}),\psi_{u}(x,\zeta_{[-1]}),u,u_{[1]},\dots)\,,\label{eq:backward_shift_operator}
\end{equation}
where $\psi_{x}$ and $\psi_{u}$ are the corresponding components
of (\ref{eq:sys_extension_inverse}). Since we work in a finite-dimensional
framework, it is important to emphasize that $\delta$ only yields
the correct forward-shift if the integer $l_{u}$ is chosen large
enough such that the considered function $h$ does not already depend
on $u_{[l_{u}]}$. Likewise, $\delta^{-1}$ only yields the correct
backward-shift if $l_{\zeta}$ is chosen large enough such that the
function $h$ does not already depend on $\zeta_{[-l_{\zeta}]}$.
Thus, throughout this contribution we assume that $l_{u}$ and $l_{\zeta}$
are chosen large enough such that (\ref{eq:forward_shift_operator})
and (\ref{eq:backward_shift_operator}) act as correct forward- and
backward-shifts on all considered functions.

Like the discrete-time static feedback linearization problem, discrete-time
flatness is considered in a suitable neighborhood of an equilibrium
$(x_{0},u_{0})$ of the system (\ref{eq:sys}). On the manifold $\mathcal{\zeta}_{[-l_{\zeta},-1]}\times\mathcal{X}\times\mathcal{U}_{[0,l_{u}]}$,
an equilibrium corresponds to a point with coordinates $(\zeta_{0},\ldots,\zeta_{0},x_{0},u_{0},u_{0},\dots,u_{0})$
with $\zeta_{0}=g(x_{0},u_{0})$ according to (\ref{eq:sys_extension}).
Hence, evaluated at an equilibrium point (or, in other words, at an
equilibrium trajectory), the functions $\delta(h)$ and $\delta^{-1}(h)$
have the same value as the function $h$ itself.
\begin{defn}
\label{def:Flatness}(\cite{DiwoldKolarSchoberl:2020}) The system
(\ref{eq:sys}) is said to be flat around an equilibrium $(x_{0},u_{0})$,
if the $n+m$ coordinate functions $x$ and $u$ can be expressed
locally by an $m$-tuple of functions
\begin{equation}
y^{j}=\varphi^{j}(\zeta_{[-q_{1}]},\dots,\zeta_{[-1]},x,u,\dots,u_{[q_{2}]})\,,\quad j=1,\ldots,m\label{eq:flat_output}
\end{equation}
and their forward-shifts
\begin{equation}
\begin{array}{ccl}
y_{[1]} & = & \delta(\varphi(\zeta_{[-q_{1}]},\dots,\zeta_{[-1]},x,u,\dots,u_{[q_{2}]}))\\
y_{[2]} & = & \delta^{2}(\varphi(\zeta_{[-q_{1}]},\dots,\zeta_{[-1]},x,u,\dots,u_{[q_{2}]}))\\
 & \vdots
\end{array}\label{eq:flat_output_forward_shifts}
\end{equation}
up to some finite order. The $m$-tuple (\ref{eq:flat_output}) is
called a flat output.
\end{defn}

The representation of $x$ and $u$ by a flat output (\ref{eq:flat_output})
is unique and has the form
\begin{equation}
\begin{array}{ccll}
x^{i} & = & F_{x}^{i}(y,\dots,y_{[R-1]})\,,\quad & i=1,\dots,n\\
u^{j} & = & F_{u}^{j}(y,\dots,y_{[R]})\,, & j=1,\dots,m\,.
\end{array}\label{eq:flat_parameterization}
\end{equation}
The multi-index $R=(r^{1},\dots,r^{m})$ consists of the number of
forward-shifts of each component of the flat output (\ref{eq:flat_output})
that are needed to express $x$ and $u$. After substituting (\ref{eq:flat_output_forward_shifts})
into (\ref{eq:flat_parameterization}), the equations are satisfied
identically. Because of Lemma \ref{lem:functional_independence} (see
the appendix), this is equivalent to the condition
\begin{align*}
\mathrm{d}x & \in\mathrm{span}\{\mathrm{d}\varphi,\ldots,\mathrm{d}\delta^{R-1}(\varphi)\}\\
\mathrm{d}u & \in\mathrm{span}\{\mathrm{d}\varphi,\ldots,\mathrm{d}\delta^{R}(\varphi)\}
\end{align*}
which we shall use later. The uniqueness of the map (\ref{eq:flat_parameterization})
is a consequence of the fact that all forward- and backward-shifts
of a flat output are functionally independent, see \cite{DiwoldKolarSchoberl:2020}.
Furthermore, the rows of the Jacobian matrix of the right-hand side
of (\ref{eq:flat_parameterization}) with respect to $y_{[0,R]}$
are linearly independent, i.e., the map (\ref{eq:flat_parameterization})
is a submersion. With a restriction to flat outputs that are independent
of backward-shifts of the system variables, Definition \ref{def:Flatness}
leads to the concept of forward-flatness considered e.g. in \cite{Sira-RamirezAgrawal:2004},
\cite{KaldmaeKotta:2013}, or \cite{KolarKaldmaeSchoberlKottaSchlacher:2016}.
\begin{defn}
\label{def:Forward_Flatness}(\cite{DiwoldKolarSchoberl:2020}) The
system \eqref{eq:sys} is said to be forward-flat, if it meets the
conditions of Definition \ref{def:Flatness} with a flat output of
the form $y^{j}=\varphi^{j}(x,u,\dots,u_{[q_{2}]})$.
\end{defn}

For continuous-time systems, the computation of flat outputs is known
to be a challenging problem. Recent research in this field can be
found e.g. in \cite{NicolauRespondek:2017}, \cite{NicolauRespondek:2019},
or \cite{GstottnerKolarSchoberl:2021b}. For discrete-time systems,
in contrast, we have shown in \cite{KolarSchoberlDiwold:2019} that
every forward-flat system can be decomposed by coordinate transformations
into a smaller-dimensional forward-flat subsystem and an endogenous
dynamic feedback. Because of this property, it is possible to check
the forward-flatness of a system (\ref{eq:sys}) similar to the well-known
static feedback linearization test by computing a certain sequence
of distributions, see \cite{KolarDiwoldSchoberl:2019}. However, even
though ideas for an extension of this approach to the general case
of Definition \ref{def:Flatness} can be found in \cite{Kaldmae:2021},
a computationally efficient test does not yet exist. Hence, within
the present paper, we assume that a flat output is given and do not
address its computation.

\section{\label{sec:exact_linearization}Exact linearization}

In \cite{DiwoldKolarSchoberl:2020}, it has been shown that every
flat discrete-time system (\ref{eq:sys}) can be exactly linearized
by a dynamic feedback which leads to an input-output behaviour of
the form $y_{[r^{j}]}^{j}=v^{j}$, $j=1,\ldots,m$ between the new
input $v=\delta^{R}(\varphi)$ and the considered flat output (\ref{eq:flat_output}).
In the following, we address the question whether also lower-order
forward-shifts $v=\delta^{A}(\varphi)$ of the flat output with a
suitable multi-index $0\leq A\leq R$ can be chosen as new input.
This is particularly interesting for the subsequent design of a tracking
control, since with an input-output behaviour
\[
y_{[a^{j}]}^{j}=v^{j}\,,\quad j=1,\ldots,m
\]
the order of the tracking error dynamics is given by $\#A=\sum_{j=1}^{m}a^{j}$
instead of $\#R=\sum_{j=1}^{m}r^{j}$.
\begin{example}
\textcolor{blue}{\label{exa:introductory_example}}Consider the system
\begin{equation}
\begin{array}{ccl}
x^{1,+} & = & x^{1}+u^{1}\\
x^{2,+} & = & \frac{x^{3}}{u^{1}+1}\\
x^{3,+} & = & u^{2}
\end{array}\label{eq:introductory_example_sys}
\end{equation}
with the flat output
\begin{equation}
\begin{array}{ccccl}
y^{1} & = & \varphi^{1}(x) & = & x^{1}\\
y^{2} & = & \varphi^{2}(x) & = & x^{2}\,.
\end{array}\label{eq:introductory_example_flat_output}
\end{equation}
The corresponding parameterization (\ref{eq:flat_parameterization})
of the system variables by the flat output is given by
\[
\begin{array}{ccl}
x^{1} & = & y^{1}\\
x^{2} & = & y^{2}\\
x^{3} & = & y_{[1]}^{2}\left(1-y^{1}+y_{[1]}^{1}\right)\\
u^{1} & = & y_{[1]}^{1}-y^{1}\\
u^{2} & = & y_{[2]}^{2}\left(1-y_{[1]}^{1}+y_{[2]}^{1}\right)\,,
\end{array}
\]
i.e., there occur forward-shifts of the flat output up to the order
$R=(2,2)$. Thus, as shown in \cite{DiwoldKolarSchoberl:2020}, by
applying a dynamic feedback it is definitely possible to introduce
a new input $v=(\delta^{2}(\varphi^{1}),\delta^{2}(\varphi^{2}))$
and hence achieve an input-output behaviour
\[
\begin{array}{ccl}
y_{[2]}^{1} & = & v^{1}\\
y_{[2]}^{2} & = & v^{2}\,.
\end{array}
\]
However, it is actually also possible to introduce lower-order forward-shifts
of the flat output (\ref{eq:introductory_example_flat_output}) as
a new input: Let us define
\begin{equation}
\begin{array}{ccccl}
v^{1} & = & \delta(\varphi^{1}) & = & x^{1}+u^{1}\\
v^{2} & = & \delta^{2}(\varphi^{2}) & = & \frac{u^{2}}{u_{[1]}^{1}+1}
\end{array}\label{eq:introductory_example_input}
\end{equation}
(i.e., $A=(1,2)$) and complement these equations by the forward-shift
\[
v_{[1]}^{1}=\delta^{2}(\varphi^{1})=x^{1}+u^{1}+u_{[1]}^{1}\,.
\]
If we solve the resulting set of equations for $u^{1}$ and $u^{2}$
as well as $u_{[1]}^{1}$, the original inputs are given by
\begin{equation}
\begin{array}{ccl}
u^{1} & = & v^{1}-x^{1}\\
u^{2} & = & \left(1-v^{1}+v_{[1]}^{1}\right)v^{2}\,.
\end{array}\label{eq:introductory_example_feedback}
\end{equation}
With the feedback\footnote{The feedback (\ref{eq:introductory_example_feedback}) is actually
a discrete-time quasi-static feedback, see e.g. \cite{ArandaKotta:2001}.} (\ref{eq:introductory_example_feedback}), it is possible to introduce
the input (\ref{eq:introductory_example_input}) and achieve an input-output
behaviour
\[
\begin{array}{ccl}
y_{[1]}^{1} & = & v^{1}\\
y_{[2]}^{2} & = & v^{2}\,.
\end{array}
\]
However, as already mentioned in the introduction, this requires a
feedback which also depends on forward-shifts of the new input.
\end{example}

In this introductory example, we have chosen the new input $v=(\delta(\varphi^{1}),\delta^{2}(\varphi^{2}))$
without a prior theoretical justification. The criterion for the feasibility
of an $m$-tuple of forward-shifts $\delta^{A}(\varphi)$ of a flat
output (\ref{eq:flat_output}) as a new input $v$ is the possibility
to realize arbitrary trajectories $v(k+\alpha)$, $\alpha\geq0$ independently
of the previous trajectory of the system. More precisely, like for
the original input $u$, at every time step $k$ the system (\ref{eq:sys})
must permit arbitrary trajectories $v(k+\alpha)$, $\alpha\geq0$
independently of its current state $x(k)$ and past values $\zeta(k-\beta)$,
$\beta\geq1$. That is, for every possible state $x(k)$ and past
values $\zeta(k-\beta)$, $\beta\geq1$ there must exist a trajectory
$u(k+\alpha)$, $\alpha\geq0$ of the original control input such
that the desired trajectory $v(k+\alpha)$, $\alpha\geq0$ can be
realized. The practical importance of this criterion is obvious, since
otherwise there would be no guarantee that a trajectory $v(k+\alpha)$,
$\alpha\geq0$ requested e.g. by a controller for the exactly linearized
system can actually be achieved. Similar considerations for flat continuous-time
systems can be found in \cite{GstoettnerKolarSchoberl:2021}, where
it has to be ensured that arbitrary trajectories $v(t)$ can be realized.
\begin{thm}
\label{thm:Conditions_Input}Let $A=(a^{1},\ldots,a^{m})$ denote
a multi-index with $a^{j}\geq0$, $j=1,\ldots,m$. The system (\ref{eq:sys})
permits arbitrary trajectories $v(k+\alpha)$, $\alpha\geq0$ for
the forward-shifts $v=\delta^{A}(\varphi)$ of a flat output (\ref{eq:flat_output})
regardless of its current state $x(k)$ and past values $\zeta(k-\beta)$,
$\beta\geq1$ if and only if the differentials
\begin{equation}
\mathrm{d}\zeta_{[-q_{1}+A]},\ldots,\mathrm{d}\zeta_{[-1]},\mathrm{d}x,\mathrm{d}\delta^{A}(\varphi),\ldots,\mathrm{d}\delta^{R-1}(\varphi)\label{eq:Condition_Input}
\end{equation}
are linearly independent.
\end{thm}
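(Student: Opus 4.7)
The plan is to reduce the trajectory-realizability question to a submersion property, and then match that property to the differential-independence condition. I would work on the finite-dimensional manifold with coordinates $(\zeta_{[-q_{1},-1]},x,u_{[0,N]})$ for $N$ chosen large enough that all differentials under discussion live on it. Since $\mathrm{d}\zeta_{[-q_{1}+A,-1]}$ and $\mathrm{d}x$ are coordinate differentials on this manifold, the condition (\ref{eq:Condition_Input}) is equivalent to the requirement that $\mathrm{d}\delta^{A}(\varphi),\ldots,\mathrm{d}\delta^{R-1}(\varphi)$ be linearly independent modulo $\mathrm{span}\{\mathrm{d}\zeta_{[-q_{1}+A,-1]},\mathrm{d}x\}$, which by Lemma \ref{lem:functional_independence} amounts to functional independence of the tuple $(\zeta_{[-q_{1}+A,-1]},x,\delta^{A}(\varphi),\ldots,\delta^{R-1}(\varphi))$.

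For the direction ``$\Leftarrow$'' (independence implies realizability), I would invoke the parameterization (\ref{eq:flat_parameterization}) shifted forward, which yields $\mathrm{d}u_{[\alpha]}\in\mathrm{span}\{\mathrm{d}\delta^{\alpha}(\varphi),\ldots,\mathrm{d}\delta^{R+\alpha}(\varphi)\}$ for every $\alpha\geq 0$. Starting from the linearly independent set in (\ref{eq:Condition_Input}) and successively adjoining $\mathrm{d}\delta^{R}(\varphi),\mathrm{d}\delta^{R+1}(\varphi),\ldots,\mathrm{d}\delta^{R+N}(\varphi)$, one obtains a set of differentials whose span contains $\mathrm{d}\zeta_{[-q_{1}+A,-1]}$, $\mathrm{d}x$, and $\mathrm{d}u_{[0,N]}$; a dimension count against the coordinate directions of the truncated manifold shows that this set is in fact a basis. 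Hence the map
\[
(\zeta_{[-q_{1},-1]},x,u_{[0,N]})\longmapsto(\zeta_{[-q_{1},-1]},x,v_{[0]},v_{[1]},\ldots,v_{[N']})
\]
is a local diffeomorphism for a suitable $N'=N'(N)$, and by the inverse function theorem, for any fixed $(\zeta_{[-q_{1},-1]},x)$ and any prescribed values of $v_{[0]},\ldots,v_{[N']}$ near the equilibrium one finds unique $u_{[0,N]}$ realizing them. Since $N$ may be taken arbitrarily large, arbitrary trajectories of $v$ are realizable.

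For the direction ``$\Rightarrow$'' (realizability implies independence), I would argue by contrapositive. If the differentials in (\ref{eq:Condition_Input}) are linearly dependent, then since $\mathrm{d}\zeta_{[-q_{1}+A,-1]}$ and $\mathrm{d}x$ are coordinate differentials, at least one of $\mathrm{d}\delta^{A}(\varphi),\ldots,\mathrm{d}\delta^{R-1}(\varphi)$ must lie in the span of the others. By Lemma \ref{lem:functional_independence} this yields a nontrivial local functional relation $\Psi(\zeta_{[-q_{1}+A,-1]},x,\delta^{A}(\varphi),\ldots,\delta^{R-1}(\varphi))=0$. Since the entries are functions on the trajectory manifold, the relation must hold along every solution and therefore constrains the admissible values of some $v^{j}_{[k]}$ in terms of $(x,\zeta_{[-q_{1}+A,-1]})$ together with the remaining components of $(v,v_{[1]},\ldots)$. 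Consequently, for some $(\zeta_{[-q_{1},-1]},x)$ not every trajectory of $v$ is realizable, contradicting the assumption.

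The main obstacle I expect is the multi-index bookkeeping: because different components $v^{j}=\delta^{a^{j}}(\varphi^{j})$ carry individually different shift orders, the ranges of $\zeta$-shifts entering each $\delta^{\alpha}(\varphi^{j})$ are non-uniform, and the set $\mathrm{d}\zeta_{[-q_{1}+A,-1]}$ appearing in (\ref{eq:Condition_Input}) has to be justified as exactly the right span to mod out. In the sufficiency argument, the step of adjoining the higher differentials $\mathrm{d}\delta^{R}(\varphi),\ldots,\mathrm{d}\delta^{R+N}(\varphi)$ and checking that each really contributes an additional independent direction corresponding to $\mathrm{d}u_{[\alpha]}$ hinges on the rank information in (\ref{eq:flat_parameterization}) together with the known functional independence of all distinct forward-shifts of a flat output, and carrying this out cleanly for a multi-index $A$ is the most delicate step.
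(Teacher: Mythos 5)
Your necessity direction (dependence $\Rightarrow$ a functional relation $\Rightarrow$ restricted trajectories) and your overall framing via Lemma \ref{lem:functional_independence} match the paper. The gap is in your sufficiency direction: the map $(\zeta_{[-q_{1},-1]},x,u_{[0,N]})\mapsto(\zeta_{[-q_{1},-1]},x,v_{[0,N']})$ is \emph{not} a local diffeomorphism in general, and the span claim it rests on is false. To get $\mathrm{d}u_{[\alpha]}$ from the shifted parameterization you need the full range $\mathrm{d}\delta^{\alpha}(\varphi),\ldots,\mathrm{d}\delta^{R+\alpha}(\varphi)$, but your adjoined set only contains the shifts of order $\geq A$; the lower-order differentials $\mathrm{d}\varphi_{[0,A-1]}$ are not in $\mathrm{span}\{\mathrm{d}\zeta_{[\cdot]},\mathrm{d}x,\mathrm{d}\varphi_{[A,R]},\ldots\}$ unless $\#A$ equals $n$ plus the number of independent $\zeta$-directions involved. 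Concretely, take Example \ref{exa:introductory_example} with $A=R=(2,2)$ (which certainly satisfies the condition and certainly permits arbitrary $v$-trajectories): there $\mathrm{d}\delta^{2}(\varphi^{1})=\mathrm{d}x^{1}+\mathrm{d}u^{1}+\mathrm{d}u_{[1]}^{1}$ and the higher shifts only ever produce $\mathrm{d}u^{1}+\mathrm{d}u_{[1]}^{1}$ and the individual $\mathrm{d}u_{[\alpha]}^{1}$ for $\alpha\geq2$, so $\mathrm{d}u^{1}$ is not in the span and your map cannot be invertible. What is true is that the map is a \emph{submersion} onto the $v$-data: the fibres have dimension $\#A-n$ (minus the $\zeta$-contribution), which is exactly why Theorem \ref{thm:exact_linearization} needs the controller state $z=\varphi_{c}$ of dimension $\#A-n$ built from the missing differentials $\mathrm{d}\varphi_{[0,A-1]}$, and why the realizing $u$-trajectory is not unique.

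The repair is to abandon the diffeomorphism/uniqueness claim and argue with surjectivity only: condition (\ref{eq:Condition_Input}), together with the facts that all shifts of a flat output are mutually independent and that $\mathrm{d}x$ and each $\mathrm{d}\zeta_{[-\beta]}$ lie in $\mathrm{span}\{\mathrm{d}\delta^{-\beta}(\varphi),\ldots,\mathrm{d}\delta^{R-1}(\varphi)\}$, implies linear independence of the whole family $\ldots,\mathrm{d}\zeta_{[-1]},\mathrm{d}x,\mathrm{d}\delta^{A}(\varphi),\mathrm{d}\delta^{A+1}(\varphi),\ldots$ (no new dependency can be created by adjoining the shifts of order $\geq R$, since after eliminating $\mathrm{d}x$ and $\mathrm{d}\zeta$ their coefficients must vanish separately). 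Functional independence of $(\ldots,\zeta_{[-1]},x,v,v_{[1]},\ldots)$ then gives realizability by the rank theorem rather than the inverse function theorem. This corrected route is essentially the paper's proof.
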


\begin{proof}
The system permits arbitrary trajectories $v(k+\alpha)$, $\alpha\geq0$
if and only if there does not exist any nontrivial relation of the
form
\begin{equation}
\psi(\ldots,\zeta(k-2),\zeta(k-1),x(k),v(k),v(k+1),\ldots)=0\,.\label{eq:nontrivial_relation}
\end{equation}
Otherwise, (\ref{eq:nontrivial_relation}) could be solved by the
implicit function theorem for at least one component of some $v(k+\alpha)$,
$\alpha\geq0$, which would thus be uniquely determined by the other
quantities appearing in (\ref{eq:nontrivial_relation}). In our differential-geometric
framework with the manifold $\mathcal{\zeta}_{[-l_{\zeta},-1]}\times\mathcal{X}\times\mathcal{U}_{[0,l_{u}]}$,
this corresponds to the non-existence of any nontrivial relation
\[
\psi(\ldots,\zeta_{[-2]},\zeta_{[-1]},x,\delta^{A}(\varphi),\delta^{A+1}(\varphi),\ldots)=0\,.
\]
Because of Lemma \ref{lem:functional_independence}, this condition
is equivalent to the linear independence of the differentials
\begin{equation}
\ldots,\mathrm{d}\zeta_{[-2]},\mathrm{d}\zeta_{[-1]},\mathrm{d}x,\mathrm{d}\delta^{A}(\varphi),\mathrm{d}\delta^{A+1}(\varphi),\ldots\,.\label{eq:infinite_set_differentials}
\end{equation}
However, it is not necessary to check the linear independence of all
these differentials. Since the flat output (\ref{eq:flat_output})
is independent of the variables $\ldots,\zeta_{[-q_{1}-2]},\zeta_{[-q_{1}-1]}$,
its forward-shift $\delta^{A}(\varphi)$ is independent of $\ldots,\zeta_{[-q_{1}+A-2]},\zeta_{[-q_{1}+A-1]}$.
Thus, we do not need to consider the corresponding differentials.
Furthermore, since the differentials of a flat output and all its
forward- and backward-shifts are linearly independent and the fact
that
\[
\mathrm{d}x\in\mathrm{span}\{\mathrm{d}\varphi,\ldots,\mathrm{d}\delta^{R-1}(\varphi)\}
\]
as well as\footnote{Note that because of (\ref{eq:sys_extension}) the quantities $\zeta$
are functions of $x$ and $u$. Hence, their parameterization by the
flat output (\ref{eq:flat_output}) can be obtained immediately from
(\ref{eq:flat_parameterization}).}
\[
\begin{array}{ccl}
\mathrm{d}\zeta_{[-1]} & \in & \mathrm{span}\{\mathrm{d}\delta^{-1}(\varphi),\mathrm{d}\varphi,\ldots,\mathrm{d}\delta^{R-1}(\varphi)\}\\
\mathrm{d}\zeta_{[-2]} & \in & \mathrm{span}\{\mathrm{d}\delta^{-2}(\varphi),\mathrm{d}\delta^{-1}(\varphi),\ldots,\mathrm{d}\delta^{R-2}(\varphi)\}\,,\\
 & \vdots
\end{array}
\]
there is also no need to consider the differentials $\mathrm{d}\delta^{R}(\varphi),\mathrm{d}\delta^{R+1}(\varphi),\ldots$.
Consequently, the linear independence of the differentials (\ref{eq:Condition_Input})
implies the linear independence of the differentials (\ref{eq:infinite_set_differentials}),
which completes the proof.
\end{proof}
An immediate consequence of Theorem \ref{thm:Conditions_Input} is
that the choice of an input $v=\delta^{A}(\varphi)$ with $A\geq R$
is always possible.
\begin{example}
Consider again the system (\ref{eq:introductory_example_sys}) with
the flat output (\ref{eq:introductory_example_flat_output}) of Example
\ref{exa:introductory_example}. For the chosen new input (\ref{eq:introductory_example_input})
with $A=(1,2)$ and $R=(2,2)$, the differentials (\ref{eq:Condition_Input})
of Theorem \ref{thm:Conditions_Input} are given by
\[
\begin{array}{rcc}
\mathrm{d}x^{1}\\
\mathrm{d}x^{2}\\
\mathrm{d}x^{3}\\
\mathrm{d}\delta(\varphi^{1}) & = & \mathrm{d}x^{1}+\mathrm{d}u^{1}
\end{array}
\]
and obviously linearly independent. Thus, the system permits indeed
arbitrary trajectories $v(k+\alpha)$, $\alpha\geq0$ for the new
input (\ref{eq:introductory_example_input}) independently of its
current state $x(k)$ and past values of the system trajectory. The
latter play no role in this case, since the flat output (\ref{eq:introductory_example_flat_output})
is a forward-flat output. If, however, we would try to use the flat
output (\ref{eq:introductory_example_flat_output}) itself as a new
input
\[
\begin{array}{ccccl}
v^{1} & = & \varphi^{1} & = & x^{1}\\
v^{2} & = & \varphi^{2} & = & x^{2}\,,
\end{array}
\]
then it is obvious that the possible trajectories $v(k+\alpha)$,
$\alpha\geq0$ are restricted by the current state $x(k)$. Accordingly,
it can be observed that the differentials
\[
\begin{array}{rcl}
\mathrm{d}x^{1}\\
\mathrm{d}x^{2}\\
\mathrm{d}x^{3}\\
\mathrm{d}\varphi^{1} & = & \mathrm{d}x^{1}\\
\mathrm{d}\varphi^{2} & = & \mathrm{d}x^{2}\\
\mathrm{d}\delta(\varphi^{1}) & = & \mathrm{d}x^{1}+\mathrm{d}u^{1}\\
\mathrm{d}\delta(\varphi^{2}) & = & \frac{1}{u^{1}+1}\mathrm{d}x^{3}-\frac{x^{3}}{(u^{1}+1)^{2}}\mathrm{d}u^{1}
\end{array}
\]
of condition (\ref{eq:Condition_Input}) with $A=(0,0)$ are not linearly
independent.
\end{example}

\subsection{Construction of the linearizing feedback in the general case}

Theorem \ref{thm:Conditions_Input} ensures that every trajectory
$v(k+\alpha)$, $\alpha\geq0$ of the new input $v=\delta^{A}(\varphi)$
can be realized independently of the previous trajectory of the system
up to the time instant $k$ by applying a suitable trajectory of the
control input $u(k+\alpha)$, $\alpha\geq0$. In the following, we
show how the required trajectory $u(k+\alpha)$ can be determined
by a suitable state feedback. In other words, we derive a feedback
which actually introduces $v=\delta^{A}(\varphi)$ as new input. For
simplicity we assume $A\leq R$, since the choice $A=R$ is possible
anyway. The basic idea for the construction of the linearizing feedback
is similar as in the continuous-time case in \cite{GstoettnerKolarSchoberl:2021}.
However, due to the different transformation laws of continuous-time
and discrete-time systems, the proof is adapted accordingly.

Because of the linear independence of the differentials (\ref{eq:Condition_Input})
and $\mathrm{d}x\in\mathrm{span}\{\mathrm{d}\varphi_{[0,R-1]}\}$,
there exists a selection $\mathrm{d}\varphi_{c}$ of $\#A-n$ differentials
from the set $\mathrm{d}\varphi_{[0,A-1]}$ such that
\[
\mathrm{span}\{\mathrm{d}\varphi_{[0,R]}\}=\mathrm{span}\{\mathrm{d}\varphi_{[0,A-1]},\mathrm{d}\varphi_{[A,R]}\}=\mathrm{span}\{\mathrm{d}x,\mathrm{d}\varphi_{c},\mathrm{d}\varphi_{[A,R]}\}\,.
\]
As a consequence of Lemma \ref{lem:functional_independence}, there
exists a diffeomorphism $\Psi:\mathbb{R}^{\#R+m}\rightarrow\mathbb{R}^{\#R+m}$
such that locally
\begin{equation}
\varphi_{[0,R]}=\Psi(x,\varphi_{c},\varphi_{[A,R]})\label{eq:Diff_Psi}
\end{equation}
holds identically. Its inverse is given by
\[
\begin{array}{rcl}
x & = & F_{x}(\varphi_{[0,R-1]})\\
\varphi_{c} & = & \varphi_{c}\\
\varphi_{[A,R]} & = & \varphi_{[A,R]}\,,
\end{array}
\]
where $F_{x}$ is the parameterization of the state variables according
to (\ref{eq:flat_parameterization}). Based on the diffeomorphism
(\ref{eq:Diff_Psi}), the quantities $v=\delta^{A}(\varphi)$ can
be introduced as new input of the system (\ref{eq:sys}) by a dynamic
feedback with the controller state $z=\varphi_{c}$. Since the functions
$\varphi_{c}$ belong to the set $\varphi_{[0,A-1]}$, their forward-shifts
$\varphi_{c,[1]}=\delta(\varphi_{c})$ belong to the set $\varphi_{[1,A]}\subset\varphi_{[0,R]}$
and can hence be expressed as functions of $x$, $\varphi_{c}$, and
$\varphi_{[A,R]}$. The corresponding components of (\ref{eq:Diff_Psi})
are denoted in the following as $\varphi_{c,[1]}=\psi_{c,[1]}(x,\varphi_{c},\varphi_{[A,R]})$.
\begin{thm}
\label{thm:exact_linearization}Consider a system (\ref{eq:sys})
with a flat output (\ref{eq:flat_output}) and a multi-index $A\leq R$
which satisfies the condition of Theorem \ref{thm:Conditions_Input}.
With a feedback
\begin{equation}
\begin{array}{ccl}
z^{+} & = & \psi_{c,[1]}(x,z,v_{[0,R-A]})\\
u & = & F_{u}\circ\Psi(x,z,v_{[0,R-A]})
\end{array}\label{eq:Linearizing_Feedback}
\end{equation}
with $\dim(z)=\#A-n$, the closed-loop system
\begin{equation}
\begin{array}{rcl}
x^{+} & = & f(x,F_{u}\circ\Psi(x,z,v_{[0,R-A]}))\\
z^{+} & = & \psi_{c,[1]}(x,z,v_{[0,R-A]})
\end{array}\label{eq:Closed_Loop_System}
\end{equation}
has the input-output behaviour $y_{[A]}=v$.
\end{thm}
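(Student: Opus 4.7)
My plan is to exploit the fact that $\Psi$ in (\ref{eq:Diff_Psi}) is a diffeomorphism between the coordinates $(x,\varphi_{c},\varphi_{[A,R]})$ and $\varphi_{[0,R]}$, so that the feedback (\ref{eq:Linearizing_Feedback}) is essentially the pullback of the flat parameterization (\ref{eq:flat_parameterization}) under $\Psi$. Under this pullback the closed-loop dynamics should reduce to the trivial shift of the flat output, for which $y_{[A]}=v$ holds by the very definition $v=\delta^{A}(\varphi)$.

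First I would extract from (\ref{eq:Diff_Psi}) the two identities
\[
u \;=\; F_{u}\circ\Psi(x,\varphi_{c},\varphi_{[A,R]}) \qquad\text{and}\qquad \delta(\varphi_{c}) \;=\; \psi_{c,[1]}(x,\varphi_{c},\varphi_{[A,R]})\,,
\]
valid as identities on the extended manifold. The first follows by composing $F_{u}$ with the $\varphi_{[0,R]}$-components of $\Psi$ and using (\ref{eq:flat_parameterization}); the second is the defining relation of $\psi_{c,[1]}$ stated just before the theorem. Together they say that substituting $z=\varphi_{c}$ and $v_{[0,R-A]}=\varphi_{[A,R]}$ into (\ref{eq:Linearizing_Feedback}) reproduces the input prescribed by the flat parameterization and advances the controller state exactly by the one-step shift of $\varphi_{c}$.

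Next I would argue by induction on $k$ that along any solution of the closed-loop system (\ref{eq:Closed_Loop_System}), initialized with $z(k_{0})=\varphi_{c}(k_{0})$ (evaluated on the initial extended state of the original system), the relation $z(k)=\varphi_{c}(k)$ is preserved for all $k\geq k_{0}$, where $\varphi_{c}(k)$ is read off the induced $(\zeta,x,u)$-trajectory. The inductive step combines the two identities above: under $z(k)=\varphi_{c}(k)$ the feedback yields $u(k)=F_{u}(\varphi_{[0,R]}(k))$, hence $x(k+1)=f(x(k),u(k))=F_{x}(\varphi_{[1,R]}(k))$ by (\ref{eq:flat_parameterization}) shifted once, and $z(k+1)=\psi_{c,[1]}(x(k),\varphi_{c}(k),\varphi_{[A,R]}(k))=\delta(\varphi_{c})(k)=\varphi_{c}(k+1)$.

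From this invariance it follows that every closed-loop trajectory is the parameterization via (\ref{eq:flat_parameterization}) of a freely chosen flat-output trajectory, so that componentwise $v^{j}(k)=\delta^{a^{j}}(\varphi^{j})(k)=y^{j}(k+a^{j})$, which is exactly $y_{[A]}=v$. The hard part will be keeping the bookkeeping clean: $\varphi$, $\varphi_{c}$ and $\psi_{c,[1]}$ are functions on the extended manifold carrying past $\zeta$'s and future $u$'s, whereas $z$ and $v_{[0,R-A]}$ live on the closed-loop state-and-input space. The formal substitution between the two pictures, and the fact that it commutes with the forward-shift, is precisely what (\ref{eq:Diff_Psi}) together with its composition with (\ref{eq:flat_parameterization}) provides.
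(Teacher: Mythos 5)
Your two preparatory identities are correct and are indeed the raw material of the paper's proof: $F_{u}\circ\Psi(x,\varphi_{c},\varphi_{[A,R]})=F_{u}(\varphi_{[0,R]})=u$ and $\delta(\varphi_{c})=\psi_{c,[1]}(x,\varphi_{c},\varphi_{[A,R]})$ both follow from (\ref{eq:Diff_Psi}) and (\ref{eq:flat_parameterization}). The gap is in the inductive step. The feedback (\ref{eq:Linearizing_Feedback}) evaluates $F_{u}\circ\Psi$ at the \emph{externally prescribed} input values $v_{[0,R-A]}(k)$, whereas your identity evaluates it at $\varphi_{[A,R]}(k)$, the forward-shifts of the flat output along the induced trajectory. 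To conclude from $z(k)=\varphi_{c}(k)$ that $u(k)=F_{u}(\varphi_{[0,R]}(k))$ you therefore also need $v_{[0,R-A]}(k)=\varphi_{[A,R]}(k)$ --- but that is precisely (a strengthening of) the input-output relation $y_{[A]}=v$ you are trying to prove, so the induction as written is circular. Patching it by adding $v_{[0,R-A]}(k)=\varphi_{[A,R]}(k)$ to the induction hypothesis does not immediately close the loop either: for a flat output of the general form (\ref{eq:flat_output}), $\varphi_{[A,R]}(k)$ depends on $u(k),u(k+1),\dots$, hence on the controller states and inputs at \emph{future} time steps, so a forward induction on $k$ cannot establish it from data available up to time $k$.

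The paper avoids this causality tangle entirely by arguing at the level of the system equations rather than trajectories: it extends (\ref{eq:Closed_Loop_System}) by the trivial dynamics $v_{[0,R-A-1]}^{+}=v_{[1,R-A]}$, shows that $y_{[0,R]}=\Psi(x,z,v_{[0,R-A]})$ is a state transformation taking the extended closed-loop system to the Brunovsky normal form (\ref{eq:Brunovsky_Normal_Form}) (using exactly your first identity in the form $F_{x}(y_{[1,R]})=f(F_{x}(y_{[0,R-1]}),F_{u}(y_{[0,R]}))$), and then reads off $y_{[A]}=v$ because the $\varphi_{[A]}$-components of $\Psi$ are the identity on the $v$-slot. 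This is an identity of maps, so no initialization condition and no induction over time are needed. If you want to keep a trajectory-based argument, you would have to reverse the direction of construction: first build the candidate flat-output trajectory by integrating the Brunovsky chains $y_{[A]}=v$ forward from initial data read off $(x,z,v_{[0,R-A]})$ via $\Psi$, and then verify that its image under $F_{x},F_{u}$ reproduces the closed-loop trajectory --- which is the paper's computation in different clothing.
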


\begin{proof}
First, let us extend the feedback (\ref{eq:Linearizing_Feedback})
by the trivial equations $v_{[0,R-A-1]}^{+}=v_{[1,R-A]}$, such that
the extended closed-loop system
\begin{equation}
\begin{array}{rcl}
x^{+} & = & f(x,F_{u}\circ\Psi(x,z,v_{[0,R-A]}))\\
z^{+} & = & \psi_{c,[1]}(x,z,v_{[0,R-A]})\\
v_{[0,R-A-1]}^{+} & = & v_{[1,R-A]}
\end{array}\label{eq:Extended_Closed_Loop_System}
\end{equation}
has the form of a classical state representation with the input $v_{[R-A]}$.
In the following, we show that with the transformation
\begin{equation}
y_{[0,R]}=\Psi(x,z,v_{[0,R-A]})\label{eq:Transformation_Brunovsky}
\end{equation}
derived from (\ref{eq:Diff_Psi}) the system is equivalent to the
discrete-time Brunovsky normal form
\begin{equation}
\begin{array}{rclcrcl}
y^{1,+} & = & y_{[1]}^{1} & \cdots & y^{m,+} & = & y_{[1]}^{m}\\
 & \vdots &  &  &  & \vdots\\
y_{[r^{1}-1]}^{1,+} & = & y_{[r^{1}]}^{1} & \cdots & y_{[r^{m}-1]}^{m,+} & = & y_{[r^{m}]}^{m}\,.
\end{array}\label{eq:Brunovsky_Normal_Form}
\end{equation}
From the inverse
\begin{equation}
\begin{array}{rcl}
x & = & F_{x}(y_{[0,R-1]})\\
z & = & y_{c}\\
v_{[0,R-A-1]} & = & y_{[A,R-1]}\\
v_{[R-A]} & = & y_{[R]}
\end{array}\label{eq:Transformation_Brunovsky_Inverse}
\end{equation}
of (\ref{eq:Transformation_Brunovsky}) and $y_{c}\subset y_{[0,A-1]}\subset y_{[0,R-1]}$,
it is clear that the transformation is actually a state transformation
for the extended closed-loop system (\ref{eq:Extended_Closed_Loop_System}).
The input is only renamed according to $v_{[R-A]}=y_{[R]}$. Because
of the transformation law for discrete-time systems, applying the
transformation (\ref{eq:Transformation_Brunovsky_Inverse}) to the
Brunovsky normal form (\ref{eq:Brunovsky_Normal_Form}) yields
\[
\begin{array}{rcl}
x^{+} & = & F_{x}(y_{[1,R]})\circ\Psi(x,z,v_{[0,R-A]})\\
z^{+} & = & y_{c,[1]}\circ\Psi(x,z,v_{[0,R-A]})\\
v_{[0,R-A-1]}^{+} & = & y_{[A+1,R]}\circ\Psi(x,z,v_{[0,R-A]})\,.
\end{array}
\]
Using the identity\footnote{If the parameterization (\ref{eq:flat_parameterization}) of the system
variables is substituted into (\ref{eq:sys}), then the equations
are satisfied identically.} $F_{x}(y_{[1,R]})=f(F_{x}(y_{[0,R-1]}),F_{u}(y_{[0,R]}))$ as well
as $F_{x}\circ\Psi(x,z,v_{[0,R-A]})=x$ and $y_{[A+1,R]}\circ\Psi(x,z,v_{[0,R-A]})=v_{[1,R-A]}$,
the system representation (\ref{eq:Extended_Closed_Loop_System})
follows. Thus, the extended closed-loop system (\ref{eq:Extended_Closed_Loop_System})
is equivalent to the Brunovsky normal form (\ref{eq:Brunovsky_Normal_Form})
via a state transformation and a renaming of the input. Consequently,
it has the linear input-output behaviour $y_{[R]}=v_{[R-A]}$. Since
the closed-loop system (\ref{eq:Closed_Loop_System}) has the input
$v$ instead of $v_{[R-A]}$, it has the input-output behaviour $y_{[A]}=v$.
\end{proof}
In contrast to a classical static or dynamic feedback,
the feedback (\ref{eq:Linearizing_Feedback}) depends besides the
new input $v$ also on its forward-shifts up to the order $R-A$.
This is similar to the continuous-time case, where feedbacks which
depend also on time derivatives of the new input have been used successfully
for the exact linearization of flat systems since the nineties of
the last century, see e.g. \cite{DelaleauRudolph:1998} or \cite{Rudolph:2021}.
For a practical application, this means that at every time step not
only the value of $v$ itself needs to be specified but also its future
values. Hence, if a control law for the exactly linearized system
$y_{[A]}=v$ is designed, also expressions for the forward-shifts
of $v$ occuring in (\ref{eq:Linearizing_Feedback}) have to be derived.
If the system $y_{[A]}=v$ shall be controlled by a pure feedforward
control, this is of course straightforward as long as the desired
reference trajectory $y^{d}$ is known a sufficient number of time
steps ahead (note that with such a feedforward control already a dead-beat
behaviour can be achieved). In Section \ref{sec:tracking_control},
it is shown how the required forward-shifts of $v$ can be determined
also for a more general type of tracking control.
\begin{rem}
For $\#A=n$, the controller state $z$ is empty and the feedback
(\ref{eq:Linearizing_Feedback}) degenerates to a feedback of the
form $u=F_{u}\circ\Psi(x,v_{[0,R-A]})$. Since there is no controller
state but the feedback depends on forward-shifts of the closed-loop
input $v$, such a feedback is a discrete-time quasi-static feedback
as it is defined in \cite{ArandaKotta:2001}. For continuous-time
systems, a quasi-static feedback depends on time derivatives instead
of forward-shifts of the closed-loop input, see \cite{DelaleauRudolph:1998}
or \cite{Rudolph:2021}.
\end{rem}

\subsection{\label{subsec:quasistatic_linearization_xu_flat}Flat outputs that
are independent of future values of the input}

In the remainder of the paper, we consider flat outputs of the form
\begin{equation}
y^{j}=\varphi^{j}(\zeta_{[-q_{1}]},\dots,\zeta_{[-1]},x,u)\,,\quad j=1,\ldots,m\label{eq:flat_output_xu}
\end{equation}
that are independent of future values of the input $u$. With this
restriction, it is possible to derive further results in a similar
way as in \cite{GstoettnerKolarSchoberl:2021} for flat outputs of
continuous-time systems which are independent of time derivatives
of $u$. In the following, we show how to systematically construct
a ``minimal'' multi-index $\kappa=(\kappa^{1},\ldots,\kappa^{m})$
such that with $A=\kappa$ the condition of Theorem \ref{thm:Conditions_Input}
is met and $\#\kappa\leq\#A$ for all other feasible multi-indices
$A$. The basic idea is to replace the coordinates $u,u_{[1]},u_{[2]},\ldots$
of the manifold $\mathcal{\zeta}_{[-l_{\zeta},-1]}\times\mathcal{X}\times\mathcal{U}_{[0,l_{u}]}$
step by step by forward-shifts $v,v_{[1]},v_{[2]},\ldots$ of the
flat output (\ref{eq:flat_output_xu}) with $v_{[\alpha]}=\delta^{\kappa+\alpha}(\varphi)$,
$\alpha\geq0$, such that finally we have coordinates $(\ldots\zeta_{[-2]},\zeta_{[-1]},x,v,v_{[1]},v_{[2]},\ldots)$.\footnote{Since the manifold $\mathcal{\zeta}_{[-l_{\zeta},-1]}\times\mathcal{X}\times\mathcal{U}_{[0,l_{u}]}$
is finite-dimensional, in fact some of the higher-order forward-shifts
of $u$ cannot be replaced by forward-shifts of the flat output and
must be kept as coordinates (unless the system (\ref{eq:sys}) is
static feedback linearizable and (\ref{eq:flat_output_xu}) a linearizing
output).} For this purpose, we forward-shift every component of the flat output
(\ref{eq:flat_output_xu}) until it depends explicitly on the input
$u$, and introduce as many of these functions as possible as new
coordinates. Subsequently, the other components of the flat output
are further shifted until they depend explicitly on the remaining
components of $u$, and again as many of these functions as possible
are introduced as new coordinates. Continuing this procedure until
all $m$ components of the original input $u$ have been replaced
by forward-shifts of the flat output (\ref{eq:flat_output_xu}) yields
a minimal multi-index $\kappa=(\kappa^{1},\ldots,\kappa^{m})$ such
that with $v=\delta^{\kappa}(\varphi)$ the condition of Theorem \ref{thm:Conditions_Input}
is satisfied. In the following, we explain the procedure in detail.

In the first step, determine the multi-index $K_{1}=(k_{1}^{1},\ldots,k_{1}^{m})$
such that
\begin{align*}
\delta^{k_{1}^{j}-1}(\varphi^{j}) & =\varphi_{[k_{1}^{j}-1]}^{j}(\zeta_{[-q_{1},-1]},x)\\
\delta^{k_{1}^{j}}(\varphi^{j}) & =\varphi_{[k_{1}^{j}]}^{j}(\zeta_{[-q_{1},-1]},x,u)
\end{align*}
and define $m_{1}=\mathrm{rank}(\partial_{u}\varphi_{[K_{1}]})$.
Then reorder the components of the flat output (\ref{eq:flat_output_xu})
and the input $u$ such that $\mathrm{rank}(\partial_{u_{1}}\varphi_{1,[\kappa_{1}]})=m_{1}$,
where $\varphi_{1}=(\varphi^{1},\ldots,\varphi^{m_{1}})$, $u_{1}=(u^{1},\ldots,u^{m_{1}})$,
and $\kappa_{1}=(k_{1}^{1},\ldots,k_{1}^{m_{1}})$ consist of the
first $m_{1}$ components of $\varphi$, $u$, and $K_{1}$, respectively.
Now apply the coordinate transformation
\begin{align}
\begin{aligned}v_{1} & =\varphi_{1,[\kappa_{1}]}(\zeta_{[-q_{1},-1]},x,u)\\
u_{rest_{1}} & =(u^{m_{1}+1},\ldots,u^{m})\\[1ex]
v_{1,[1]} & =\varphi_{1,[\kappa_{1}+1]}(\zeta_{[-q_{1},-1]},x,u,u_{[1]})\\
u_{rest_{1},[1]} & =(u_{[1]}^{m_{1}+1},\ldots,u_{[1]}^{m})\\[1ex]
v_{1,[2]} & =\varphi_{1,[\kappa_{1}+2]}(\zeta_{[-q_{1},-1]},x,u,u_{[1]},u_{[2]})\\
u_{rest_{1},[2]} & =(u_{[2]}^{m_{1}+1},\ldots,u_{[2]}^{m})\\
 & \:\:\vdots
\end{aligned}
\label{eq:trans_1}
\end{align}
which replaces the inputs $u_{1}$ and their forward-shifts by $v_{1}$
and its forward-shifts. The remaining inputs $u_{rest_{1}}=(u^{m_{1}+1},\ldots,u^{m})$
and their forward-shifts are left unchanged.
\begin{rem}
The coordinate transformation (\ref{eq:trans_1}) is indeed regular,
since in a sufficiently small neighborhood of an equilibrium point
the condition $\mathrm{rank}(\partial_{u_{1}}\varphi_{1,[\kappa_{1}]})=m_{1}$
implies $\mathrm{rank}(\partial_{u_{1,[\alpha]}}\varphi_{1,[\kappa_{1}+\alpha]})=m_{1}$
for $\alpha\geq1$. In the new coordinates, the forward-shift operator
(\ref{eq:forward_shift_operator}) has the form
\begin{multline*}
\delta(h(\dots,\zeta_{[-2]},\zeta_{[-1]},x,v_{1},u_{rest_{1}},v_{1,[1]},u_{rest_{1},[1]},\dots))\\
=h(\dots,\zeta_{[-1]},g(x,\hat{\Phi}),f(x,\hat{\Phi}),v_{1,[1]},u_{rest_{1},[1]},v_{1,[2]},u_{rest_{1},[2]},\dots)
\end{multline*}
with $\hat{\Phi}$ denoting the inverse of the transformation (\ref{eq:trans_1}).
\end{rem}

After the coordinate transformation (\ref{eq:trans_1}) we have
\begin{align*}
y_{1,[0,\kappa_{1}-1]} & =\varphi_{1,[0,\kappa_{1}-1]}(\zeta_{[-q_{1},-1]},x)\\
y_{1,[\kappa_{1}]} & =v_{1}\\
y_{rest_{1},[0,K_{rest_{1}}-1]} & =\varphi_{rest_{1},[0,K_{rest_{1}}-1]}(\zeta_{[-q_{1},-1]},x)\\
y_{rest_{1},[K_{rest_{1}}]} & =\varphi_{rest_{1},[K_{rest_{1}}]}(\zeta_{[-q_{1},-1]},x,v_{1})
\end{align*}
with $y_{1}=(y^{1},\ldots,y^{m_{1}})$, $y_{rest_{1}}=(y^{m_{1}+1},\ldots,y^{m})$,
$K_{rest_{1}}=(k_{1}^{m_{1}+1},\ldots,k_{1}^{m})$, and $\varphi_{rest_{1},[\alpha]}=(\varphi_{[\alpha]}^{m_{1}+1},\ldots,\varphi_{[\alpha]}^{m})\circ\hat{\Phi}$.
The functions $\varphi_{rest_{1},[K_{rest_{1}}]}$ are independent
of $u_{rest_{1}}$, since otherwise $\mathrm{rank}(\partial_{u}\varphi_{[K_{1}]})$
would have been larger than $m_{1}$.

In the second step, determine the multi-index $K_{2}=(k_{2}^{1},\ldots,k_{2}^{m-m_{1}})$
such that
\begin{align*}
\delta^{k_{2}^{j}-1}(\varphi_{rest_{1}}^{j}) & =\varphi_{rest_{1},[k_{2}^{j}-1]}^{j}(x,v_{1},v_{1,[1]},\ldots)\\
\delta^{k_{2}^{j}}(\varphi_{rest_{1}}^{j}) & =\varphi_{rest_{1},[k_{2}^{j}]}^{j}(x,v_{1},v_{1,[1]},\ldots,u_{rest_{1}})
\end{align*}
and let $m_{2}=\mathrm{rank}(\partial_{u_{rest_{1}}}\varphi_{rest_{1},[K_{2}]})$.
Then reorder the components of the flat output belonging to $y_{rest_{1}}$
and the components of the input belonging to $u_{rest_{1}}$ such
that $\mathrm{rank}(\partial_{u_{2}}\varphi_{2,[\kappa_{2}]})=m_{2}$,
where $\varphi_{2}=(\varphi_{rest_{1}}^{1},\ldots,\varphi_{rest_{1}}^{m_{2}})$,
$u_{2}=(u_{rest_{1}}^{1},\ldots,u_{rest_{1}}^{m_{2}})$, and $\kappa_{2}=(k_{2}^{1},\ldots,k_{2}^{m_{2}})$
consist of the first $m_{2}$ components of $\varphi_{rest_{1}}$,
$u_{rest_{1}}$, and $K_{2}$, respectively. Now apply the coordinate
transformation\footnote{The functions $\varphi_{2,[\kappa_{2}]}$ depend on forward-shifts
of $v_{1}$, and since we work on a finite-dimensional manifold $\mathcal{\zeta}_{[-l_{\zeta},-1]}\times\mathcal{X}\times\mathcal{U}_{[0,l_{u}]}$
these forward-shifts are only available up to the order $l_{u}$.
Thus, some higher-order forward-shifts of $u_{2}$ must be kept as
coordinates on $\mathcal{\zeta}_{[-l_{\zeta},-1]}\times\mathcal{X}\times\mathcal{U}_{[0,l_{u}]}$
and cannot be replaced by forward-shifts of $v_{2}$. However, as
long as $l_{u}$ is chosen sufficiently large, this does not affect
our considerations.}
\begin{align}
\begin{aligned}v_{2} & =\varphi_{2,[\kappa_{2}]}(x,v_{1},v_{1,[1]},\ldots,u_{rest_{1}})\\
u_{rest_{2}} & =(u^{m_{1}+m_{2}+1},\ldots,u^{m})\\[1ex]
v_{2,[1]} & =\varphi_{2,[\kappa_{2}+1]}(x,v_{1},v_{1,[1]},\ldots,u_{rest_{1}},u_{rest_{1},[1]})\\
u_{rest_{2},[1]} & =(u_{[1]}^{m_{1}+m_{2}+1},\ldots,u_{[1]}^{m})\\[1ex]
v_{2,[2]} & =\varphi_{2,[\kappa_{2}+2]}(x,v_{1},v_{1,[1]},\ldots,u_{rest_{1}},u_{rest_{1},[1]},u_{rest_{1},[2]})\\
u_{rest_{2},[2]} & =(u_{[2]}^{m_{1}+m_{2}+1},\ldots,u_{[2]}^{m})\\
 & \:\:\vdots
\end{aligned}
\label{eq:trans_2}
\end{align}
which replaces the inputs $u_{2}=(u^{m_{1}+1},\ldots,u^{m_{1}+m_{2}})$
and their forward-shifts by $v_{2}$ and its forward-shifts. The remaining
inputs $u_{rest_{2}}=(u^{m_{1}+m_{2}+1},\ldots,u^{m})$ and their
forward-shifts are left unchanged. After the coordinate transformation
(\ref{eq:trans_2}) we have
\begin{align*}
y_{1,[0,\kappa_{1}-1]} & =\varphi_{1,[0,\kappa_{1}-1]}(\zeta_{[-q_{1},-1]},x)\\
y_{1,[\kappa_{1}]} & =v_{1}\\
y_{2,[0,\kappa_{2}-1]} & =\varphi_{2,[0,\kappa_{2}-1]}(\zeta_{[-q_{1},-1]},x,v_{1},v_{1,[1]},\ldots)\\
y_{2,[\kappa_{2}]} & =v_{2}\\
y_{rest_{2},[0,K_{rest_{2}}-1]} & =\varphi_{rest_{2},[0,K_{rest_{2}}-1]}(\zeta_{[-q_{1},-1]},x,v_{1},v_{1,[1]},\ldots)\\
y_{rest_{2},[K_{rest_{2}}]} & =\varphi_{rest_{2},[K_{rest_{2}}]}(\zeta_{[-q_{1},-1]},x,v_{1},v_{1,[1]},\ldots,v_{2})\,,
\end{align*}
where $y_{2}=(y^{m_{1}+1},\ldots,y^{m_{1}+m_{2}})$, $y_{rest_{2}}=(y^{m_{1}+m_{2}+1},\ldots,y^{m})$,
$K_{rest_{2}}=(k_{2}^{m_{2}+1},\ldots,k_{2}^{m-m_{1}})$, and $\varphi_{rest_{2},[\alpha]}=(\varphi_{rest_{1},[\alpha]}^{m_{2}+1},\ldots,\varphi_{rest_{1},[\alpha]}^{m-m_{1}})\circ\hat{\Phi}$
with the inverse $\hat{\Phi}$ of the transformation (\ref{eq:trans_2}).
The functions $\varphi_{rest_{2},[K_{rest_{2}}]}$ are again independent
of $u_{rest_{2}}$, since otherwise $\mathrm{rank}(\partial_{u_{rest_{1}}}\varphi_{rest_{1},[K_{2}]})$
would have been larger than $m_{2}$.

This procedure is now continued until in some step $s$ we obtain
a multi-index $K_{s}=(k_{s}^{1},\ldots,k_{s}^{m-m_{1}-\ldots-m_{s-1}})$
with
\begin{align*}
\delta^{k_{s}^{j}-1}(\varphi_{rest_{s-1}}^{j}) & =\varphi_{rest_{s-1},[k_{s}^{j}-1]}^{j}(x,v_{1},v_{1,[1]},\ldots,v_{s-1},v_{s-1,[1]},\ldots)\\
\delta^{k_{s}^{j}}(\varphi_{rest_{s-1}}^{j}) & =\varphi_{rest_{s-1},[k_{s}^{j}]}^{j}(x,v_{1},v_{1,[1]},\ldots,v_{s-1},v_{s-1,[1]},\ldots,u_{rest_{s-1}})
\end{align*}
such that $\mathrm{rank}(\partial_{u_{rest_{s-1}}}\varphi_{rest_{s-1},[K_{s}]})=\dim(u_{rest_{s-1}})$.
Thus, with $\varphi_{s}=\varphi_{rest_{s-1}}$ and $\kappa_{s}=K_{s}$
we can apply the coordinate transformation
\begin{align*}
v_{s} & =\varphi_{s,[\kappa_{s}]}(x,v_{1},v_{1,[1]},\ldots,v_{s-1},v_{s-1,[1]},\ldots,u_{rest_{s-1}})\,,
\end{align*}
which replaces the remaining inputs $u_{rest_{s-1}}$ by $v_{s}$.
With the constructed coordinates, the flat output and its forward-shifts
up to the orders $\kappa_{i}$ are finally given by
\begin{equation}
\begin{array}{rcl}
y_{1,[0,\kappa_{1}-1]} & = & \varphi_{1,[0,\kappa_{1}-1]}(\zeta_{[-q_{1},-1]},x)\\
y_{1,[\kappa_{1}]} & = & v_{1}\\
y_{2,[0,\kappa_{2}-1]} & = & \varphi_{2,[0,\kappa_{2}-1]}(\zeta_{[-q_{1},-1]},x,v_{1},v_{1,[1]},\ldots)\\
y_{2,[\kappa_{2}]} & = & v_{2}\\
 & \vdots\\
y_{s-1,[0,\kappa_{s-1}-1]} & = & \varphi_{s-1,[0,\kappa_{s-1}-1]}(\zeta_{[-q_{1},-1]},x,v_{1},v_{1,[1]},\ldots,v_{s-2},v_{s-2,[1]},\ldots)\\
y_{s-1,[\kappa_{s-1}]} & = & v_{s-1}\\
y_{s,[0,\kappa_{s}-1]} & = & \varphi_{s,[0,\kappa_{s}-1]}(\zeta_{[-q_{1},-1]},x,v_{1},v_{1,[1]},\ldots,v_{s-1},v_{s-1,[1]},\ldots)\\
y_{s,[\kappa_{s}]} & = & v_{s}
\end{array}\label{eq:new _coordinates}
\end{equation}
with $\dim(y_{i})=m_{i}$ and $\kappa_{i}=(\kappa_{i}^{1},\ldots,\kappa_{i}^{m_{i}})$.
\begin{thm}
For every flat output (\ref{eq:flat_output_xu}) of the system (\ref{eq:sys}),
the above procedure terminates after $s\leq m$ steps. The multi-index
$\kappa=(\kappa_{1},\ldots,\kappa_{s})$ formed by the constructed
multi-indices $\kappa_{i}=(\kappa_{i}^{1},\ldots,\kappa_{i}^{m_{i}})$
has the following properties:
\begin{enumerate}
\item[i)] $\kappa\leq R$
\item[ii)] $\#\kappa\geq n$, and $\#\kappa=n$ if and only if the flat output
(\ref{eq:flat_output_xu}) is independent of the variables $\zeta_{[-q_{1}]},\dots,\zeta_{[-1]}$.
\item[iii)] With $A=\kappa$ the condition of Theorem \ref{thm:Conditions_Input}
is met, and $\#\kappa\leq\#A$ for all other multi-indices $A$ that
satisfy this condition.
\end{enumerate}
Furthermore, (\ref{eq:new _coordinates}) is actually of the form
\begin{equation}
\begin{array}{rcl}
y_{1,[0,\kappa_{1}-1]} & = & \varphi_{1,[0,\kappa_{1}-1]}(\zeta_{[-q_{1},-1]},x)\\
y_{1,[\kappa_{1}]} & = & v_{1}\\
y_{2,[0,\kappa_{2}-1]} & = & \varphi_{2,[0,\kappa_{2}-1]}(\zeta_{[-q_{1},-1]},x,v_{1,[0,r_{1}-\kappa_{1}-1]})\\
y_{2,[\kappa_{2}]} & = & v_{2}\\
 & \vdots\\
y_{s-1,[0,\kappa_{s-1}-1]} & = & \varphi_{s-1,[0,\kappa_{s-1}-1]}(\zeta_{[-q_{1},-1]},x,v_{1,[0,r_{1}-\kappa_{1}-1]},\ldots,v_{s-2,[0,r_{s-2}-\kappa_{s-2}-1]})\\
y_{s-1,[\kappa_{s-1}]} & = & v_{s-1}\\
y_{s,[0,\kappa_{s}-1]} & = & \varphi_{s,[0,\kappa_{s}-1]}(\zeta_{[-q_{1},-1]},x,v_{1,[0,r_{1}-\kappa_{1}-1]},\ldots,v_{s-1,[0,r_{s-1}-\kappa_{s-1}-1]})\\
y_{s,[\kappa_{s}]} & = & v_{s}\,,
\end{array}\label{eq:new_coordinates_2}
\end{equation}
where $r_{i}=(r_{i}^{1},\ldots,r_{i}^{m_{i}})$ denotes the components
of the multi-index $R$ corresponding to the components $y_{i}=(y_{i}^{1},\ldots,y_{i}^{m_{i}})$
of the flat output.

\end{thm}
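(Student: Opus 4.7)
My plan is to handle the four claims in turn, relying throughout on the structural identity $y_{j,[\kappa_j+\beta]} = v_{j,[\beta]}$ for $\beta \geq 0$ and on the fact that, in the coordinates produced after step $i-1$, the functions $y_{i,[0,\kappa_i-1]}$ do not depend on the still-unreplaced inputs.

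I would first settle termination and property (i). At each step $i$, the rank $m_i = \mathrm{rank}(\partial_{u_{rest_{i-1}}}\varphi_{rest_{i-1},[K_i]})$ must be strictly positive as long as $u_{rest_{i-1}}$ is nonempty, for otherwise the remaining inputs could not be parameterized by forward-shifts of $\varphi$, contradicting (\ref{eq:flat_parameterization}). Hence the procedure terminates in $s \leq m$ steps. For (i), the uniqueness and minimality of $R$ in the parameterization of $(x,u)$ forces $y_{i,[r_i]}$ to be genuinely needed; this in turn implies that $\varphi_i$ at shift $r_i$ already depends on some not-yet-eliminated input in the intermediate coordinates, giving $\kappa_i \leq r_i$ componentwise.

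For (ii), I would substitute the expressions (\ref{eq:new _coordinates}) into $x = F_x(y_{[0,R-1]})$ after using $y_{j,[\kappa_j+\beta]} = v_{j,[\beta]}$ to rewrite the high shifts, yielding $x = \tilde F_x\bigl(y_{[0,\kappa-1]}(\zeta_{[-q_1,-1]},x,v,v_{[1]},\ldots),v,v_{[1]},\ldots\bigr)$. Differentiating with respect to $x$, holding $\zeta$ and the $v$-shifts fixed, produces the identity $I_n = \partial_{y_{[0,\kappa-1]}}\tilde F_x \cdot \partial_x y_{[0,\kappa-1]}$, which forces $\mathrm{rank}(\partial_x y_{[0,\kappa-1]}) = n$ and hence $\#\kappa \geq n$. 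When $\#\kappa = n$, the matrix $\partial_{y_{[0,\kappa-1]}}\tilde F_x$ is $n \times n$ invertible, so differentiating the same relation in $\zeta$ yields $\partial_\zeta y_{[0,\kappa-1]} = 0$; tracing this $\zeta$-independence back through the construction shows $\varphi$ itself is independent of $\zeta_{[-q_1,-1]}$. The converse direction, that $\zeta$-independence of $\varphi$ implies $\#\kappa \leq n$, I would prove by a matching dimension count on the coordinate bijection $(x, u, u_{[1]}, \ldots) \leftrightarrow (x, v_1, v_{1,[1]}, \ldots, v_s, v_{s,[1]}, \ldots)$, which provides the complementary upper bound.

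For (iii), feasibility of $A = \kappa$ is automatic because $(\zeta_{[-q_1,-1]}, x, v_1, v_{1,[1]}, \ldots, v_s, v_{s,[1]}, \ldots)$ are valid coordinates by construction, and the differentials appearing in (\ref{eq:Condition_Input}) with $A=\kappa$ form a subset of the associated coordinate differentials. Minimality follows because if a feasible $A$ satisfied $a^j < \kappa_i^j$ for a component $\varphi^j$ of $\varphi_i$, then the step-$i$ construction forces $d\delta^{a^j}(\varphi^j) \in \mathrm{span}\{d\zeta_{[-q_1,-1]}, dx, dv_{1,[\cdot]}, \ldots, dv_{i-1,[\cdot]}\}$, which already lies in the span of the remaining differentials in (\ref{eq:Condition_Input}), violating linear independence. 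The refined form (\ref{eq:new_coordinates_2}) follows because the replacement $y_{j,[\kappa_j+\beta]} = v_{j,[\beta]}$ used to eliminate shifts of the original inputs reaches only up to $\beta = r_j - \kappa_j - 1$ before entering the $F_u$-side of the parameterization, so no shift of $v_j$ beyond that order can appear in any expression for $y_{i,[\alpha]}$ with $i > j$ and $\alpha \leq \kappa_i - 1$. The main technical obstacle will be the reverse direction of the equivalence in (ii): propagating $\zeta$-independence of $y_{[0,\kappa-1]}$ back to $\varphi$ itself requires care when some $\kappa^j$ equals $0$, and the upper bound $\#\kappa \leq n$ in the forward-flat case demands a careful shift-level dimension count of the coordinate bijection.
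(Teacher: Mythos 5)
The decisive problem is your argument for the minimality claim in iii). You attempt to show that every feasible multi-index $A$ dominates $\kappa$ \emph{componentwise}, by arguing that $a^{j}<\kappa_{i}^{j}$ would place $\mathrm{d}\delta^{a^{j}}(\varphi^{j})$ in $\mathrm{span}\{\mathrm{d}\zeta_{[-q_{1},-1]},\mathrm{d}x,\mathrm{d}v_{1,[\cdot]},\ldots,\mathrm{d}v_{i-1,[\cdot]}\}$, and that this span ``already lies in the span of the remaining differentials in (\ref{eq:Condition_Input})''. That containment is false in general: the differentials $\mathrm{d}v_{l,[\beta]}=\mathrm{d}\delta^{\kappa_{l}+\beta}(\varphi_{l})$ appear among $\mathrm{d}\delta^{A}(\varphi),\ldots,\mathrm{d}\delta^{R-1}(\varphi)$ only when the component of $A$ belonging to block $l$ does not exceed $\kappa_{l}+\beta$; if it does, $\mathrm{d}v_{l}$ is simply not in the list. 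Worse, the conclusion you are aiming for is itself wrong. For the system (\ref{eq:introductory_example_sys}) with flat output (\ref{eq:introductory_example_flat_output}) the procedure yields (for one admissible selection) $\kappa=(1,2)$, yet wherever $x^{3}\neq0$ the multi-index $A=(2,1)$ also satisfies the condition of Theorem \ref{thm:Conditions_Input}: the required differentials are just $\mathrm{d}x^{1},\mathrm{d}x^{2},\mathrm{d}x^{3},\mathrm{d}\delta(\varphi^{2})$, which are independent, even though $a^{2}=1<\kappa^{2}=2$. Only the sum inequality $\#A\geq\#\kappa$ is true, and it cannot be reached by a componentwise argument. The paper obtains it by a counting argument that is entirely absent from your plan: in the constructed coordinates there exist exactly $\#\kappa$ independent linear combinations of the differentials of the flat output and its forward-shifts contained in $\mathrm{span}\{\mathrm{d}\zeta_{[-q_{1}]},\ldots,\mathrm{d}\zeta_{[-1]},\mathrm{d}x\}$, whereas feasibility of any $A$ caps the number of such combinations at $\#A$. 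This dimension count is the essential missing idea.

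The other parts are largely sound and close to the paper's route: termination and the feasibility of $A=\kappa$ are argued identically, and your Jacobian identity $I_{n}=\partial_{y_{[0,\kappa-1]}}\tilde F_{x}\cdot\partial_{x}y_{[0,\kappa-1]}$ is an equivalent rephrasing of the paper's count of combinations lying in $\mathrm{span}\{\mathrm{d}x\}$ (you rightly flag the $\kappa^{j}=0$ edge case and the converse direction of ii) as needing care; the paper is itself terse on the ``only if'' half). For i), however, your step from ``$y_{i,[r_{i}]}$ is genuinely needed in (\ref{eq:flat_parameterization})'' to ``$\delta^{r_{i}^{j}}(\varphi_{i}^{j})$ depends on a not-yet-eliminated input in the intermediate coordinates'' is asserted rather than proved. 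The paper's cleaner device, which also delivers the refined form (\ref{eq:new_coordinates_2}) for free, is to substitute the flat parameterization of $\zeta$ and $x$ into the constructed expressions and invoke the functional independence of all shifts of the flat output: the resulting identity for $y_{i,[\kappa_{i}-1]}^{j_{i}}$ can only hold if that shift occurs among $y_{[0,R-1]}$, forcing $\kappa\leq R$, and any $v_{l,[\beta]}$ with $\kappa_{l}+\beta\geq r_{l}$ must drop out of the right-hand side. You should rebuild both i) and the minimality half of iii) on arguments of this type.
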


\begin{proof}
In every step $i\geq1$ of the procedure, it is possible to forward-shift
the remaining components $\varphi_{rest_{i-1}}$ of the flat output
until every component depends explicitly on one of the remaining inputs
$u_{rest_{i-1}}$ ($\varphi_{rest_{0}}=\varphi$ and $u_{rest_{0}}=u$
for $i=1$). Otherwise, the property that all forward-shifts of a
flat output up to arbitrary order are functionally independent could
not hold. Since in every step we have $\mathrm{rank}(\partial_{u_{rest_{i-1}}}\varphi_{rest_{i-1},[K_{i}]})\geq1$,
at least one of the original inputs $u$ can be eliminated, and the
procedure terminates after at most $\dim(u)=m$ steps.

Now let us prove that $\kappa\leq R$. If the parameterization of
$x$ and $\zeta_{[-q_{1},-1]}$ by the flat output is substituted
into (\ref{eq:new _coordinates}) and $y_{[\kappa+\alpha]}$, $\alpha\geq0$
renamed according to $y_{[\kappa+\alpha]}=v_{[\alpha]}$, then the
equations
\begin{equation}
y_{i,[0,\kappa_{i}-1]}^{j_{i}}=\varphi_{i,[0,\kappa_{i}-1]}^{j_{i}}(\zeta_{[-q_{1},-1]},x,v_{1},v_{1,[1]},\ldots,v_{i-1},v_{i-1,[1]},\ldots)\,,\quad j_{i}=1,\ldots,m_{i}\,,\quad i=1,\ldots,s\label{eq:identity}
\end{equation}
must be satisfied identically. Since both $x$ and $\zeta_{[-q_{1},-1]}$
depend only on forward-shifts of the flat output up to the order $R-1$,
and the forward-shifts $y_{i,[0,\kappa_{i}-1]}^{j_{i}}$ of $y_{i}^{j_{i}}$
on the left-hand side of (\ref{eq:identity}) are not contained in
the quantities $v_{1},v_{1,[1]},\ldots,v_{i-1},v_{i-1,[1]},\ldots$
on the right-hand side, this can only hold in the case $\kappa\leq R$.
By the same argument, it is clear that (\ref{eq:new _coordinates})
is actually of the form (\ref{eq:new_coordinates_2}).

To prove $\#\kappa\geq n$, recall that there exist exactly $n$ independent
linear combinations of the differentials of a flat output and its
forward-shifts which are contained in $\mathrm{span}\{\mathrm{d}x\}$.
In the coordinates constructed during the above procedure, the forward-shifts
of the flat output up to the order $\kappa$ are given by the expressions
in (\ref{eq:new _coordinates}), and the higher forward-shifts are
forward-shifts of $v$. Thus, there can exist at most $\#\kappa$
independent linear combinations which are contained in $\mathrm{span}\{\mathrm{d}x\}$,
and hence $\#\kappa\geq n$. If the flat output (\ref{eq:flat_output_xu})
is independent of $\zeta_{[-q_{1}]},\dots,\zeta_{[-1]}$, then all
expressions in (\ref{eq:new _coordinates}) are independent of these
variables. Consequently, there exist exactly $\#\kappa$ independent
linear combinations of the differentials of the flat output and its
forward-shifts which are contained in $\mathrm{span}\{\mathrm{d}x\}$,
and hence $\#\kappa=n$.

To prove the relation with Theorem \ref{thm:Conditions_Input}, we
use again the representation (\ref{eq:new _coordinates}). In these
coordinates, it can be immediately observed that with $A=\kappa$
the differentials (\ref{eq:Condition_Input}) are linearly independent.
Moreover, it can also be seen that there exist exactly $\#\kappa$
independent linear combinations of the differentials of the flat output
and its forward-shifts which are contained in $\mathrm{span}\{\mathrm{d}\zeta_{[-q_{1}]},\ldots,\mathrm{d}\zeta_{[-1]},\mathrm{d}x\}$.
If there exists a multi-index $A$ such that the differentials (\ref{eq:Condition_Input})
are linearly independent, then there can exist at most $\#A$ independent
linear combinations of the differentials of the flat output and its
forward-shifts which are contained in $\mathrm{span}\{\mathrm{d}\zeta_{[-q_{1}]},\ldots,\mathrm{d}\zeta_{[-1]},\mathrm{d}x\}$.
Thus, the existence of such a multi-index with $\#A<\#\kappa$ would
be a contradiction.
\end{proof}
Since the condition of Theorem \ref{thm:Conditions_Input} is met,
the forward-shifts $y_{[\kappa]}$ of the flat output (\ref{eq:flat_output_xu})
can be introduced as a new input $v$ by a (dynamic) feedback according
to Theorem \ref{thm:exact_linearization}, where the controller state
$z$ corresponds to suitable forward-shifts of the flat output that
are contained in $y_{[0,\kappa-1]}$. However, the representation
(\ref{eq:new_coordinates_2}) of $y_{[0,\kappa-1]}$ offers a convenient
alternative. With the map $y_{[0,R]}=\phi(\zeta_{[-q_{1},-1]},x,v_{[0,R-\kappa]})$
defined by
\[
\begin{array}{ccl}
y_{1,[0,\kappa_{1}-1]} & = & \varphi_{1,[0,\kappa_{1}-1]}(\zeta_{[-q_{1},-1]},x)\\
y_{1,[\kappa_{1},r_{1}]} & = & v_{1,[0,r_{1}-\kappa_{1}]}\\
 & \vdots\\
y_{s,[0,\kappa_{s}-1]} & = & \varphi_{s,[0,\kappa_{s}-1]}(\zeta_{[-q_{1},-1]},x,v_{1,[0,r_{1}-\kappa_{1}-1]},\ldots,v_{s-1,[0,r_{s-1}-\kappa_{s-1}-1]})\\
y_{s,[\kappa_{s},r_{s}]} & = & v_{s,[0,r_{s}-\kappa_{s}]}\,,
\end{array}
\]
we can formulate the following corollary.
\begin{cor}
\label{cor:quasistatic_linearization}A flat system (\ref{eq:sys})
with a flat output of the form (\ref{eq:flat_output_xu}) can be exactly
linearized with respect to this flat output by a feedback of the form
\begin{equation}
u=F_{u}\circ\phi(\zeta_{[-q_{1},-1]},x,v_{[0,R-\kappa]})\,,\label{eq:alternative_linearizing_feedback}
\end{equation}
such that the input-output behaviour of the closed-loop system is
given by $y_{[\kappa]}=v$. If the flat output (\ref{eq:flat_output_xu})
is independent of the variables $\zeta_{[-q_{1}]},\dots,\zeta_{[-1]}$,
then the feedback has the form
\begin{equation}
u=F_{u}\circ\phi(x,v_{[0,R-\kappa]})\,,\label{eq:quasistatic_feedback}
\end{equation}
and $\#\kappa=n$.
\end{cor}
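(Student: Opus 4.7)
The plan is to build the feedback (\ref{eq:alternative_linearizing_feedback}) directly from the explicit representation (\ref{eq:new_coordinates_2}) constructed in Subsection \ref{subsec:quasistatic_linearization_xu_flat}, and then to use Theorem \ref{thm:exact_linearization} to certify that it produces the desired closed-loop behaviour $y_{[\kappa]}=v$. The map $\phi$ stated just above the corollary already assembles the entire tuple $y_{[0,R]}$ as a function of $(\zeta_{[-q_{1},-1]},x,v_{[0,R-\kappa]})$: the blocks $y_{i,[0,\kappa_{i}-1]}$ are given by the functions $\varphi_{i,[0,\kappa_{i}-1]}$ read off from (\ref{eq:new_coordinates_2}), while the blocks $y_{i,[\kappa_{i},r_{i}]}$ are identified with $v_{i,[0,r_{i}-\kappa_{i}]}$. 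Composing this with the input parameterization $u=F_{u}(y_{[0,R]})$ from (\ref{eq:flat_parameterization}) yields exactly (\ref{eq:alternative_linearizing_feedback}), and by construction $y_{i,[\kappa_{i}]}\circ\phi=v_{i}$ for $i=1,\dots,s$, so $y_{[\kappa]}=v$ holds along any closed-loop trajectory.

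The core step is to argue that the feedback (\ref{eq:alternative_linearizing_feedback}) is legitimate, i.e.\ that it achieves the same effect as the dynamic feedback supplied by Theorem \ref{thm:exact_linearization}. By the preceding theorem, $A=\kappa$ fulfils the linear-independence condition of Theorem \ref{thm:Conditions_Input}, so Theorem \ref{thm:exact_linearization} guarantees the existence of a feedback (\ref{eq:Linearizing_Feedback}) with controller state $z=\varphi_{c}$ realizing $y_{[\kappa]}=v$. I would identify the present feedback with that one up to a reparameterization of the controller state: each component of $\varphi_{c}$, being a selection from $\varphi_{[0,\kappa-1]}$, is expressible through (\ref{eq:new_coordinates_2}) as a function of $(\zeta_{[-q_{1},-1]},x)$ and forward-shifts of $v$ of orders strictly less than $R-\kappa$. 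Thus the backward-shifted quantities $\zeta_{[-q_{1},-1]}$, which are part of the past trajectory data available on $\mathcal{\zeta}_{[-l_{\zeta},-1]}\times\mathcal{X}\times\mathcal{U}_{[0,l_{u}]}$, absorb the dynamic part of (\ref{eq:Linearizing_Feedback}), and no separate controller state is needed.

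The specialization to a flat output independent of $\zeta_{[-q_{1}]},\dots,\zeta_{[-1]}$ is then immediate: in that case every function $\varphi_{i,[0,\kappa_{i}-1]}$ in (\ref{eq:new_coordinates_2}) is independent of $\zeta_{[-q_{1},-1]}$, so $\phi$ reduces to a function of $(x,v_{[0,R-\kappa]})$ only, and the preceding theorem provides $\#\kappa=n$, which says that the controller state of Theorem \ref{thm:exact_linearization} is empty and that (\ref{eq:quasistatic_feedback}) is a genuine quasi-static feedback of $x$. I expect the main obstacle to lie not in the algebra but in checking that substituting (\ref{eq:alternative_linearizing_feedback}) into (\ref{eq:sys}) is consistent with the forward-shift operator (\ref{eq:forward_shift_operator}); in particular, one must verify that the closed-loop identity $y_{[\kappa]}=v$ is preserved under $\delta$, which is exactly what the step-by-step regularity of $\phi$, secured by the rank conditions $\mathrm{rank}(\partial_{u_{i}}\varphi_{i,[\kappa_{i}]})=m_{i}$ imposed during its construction, is designed to ensure.
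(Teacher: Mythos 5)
Your proposal is correct and follows essentially the same route as the paper, which states the corollary without a separate proof as a direct consequence of the representation (\ref{eq:new_coordinates_2}), the map $\phi$, and Theorem \ref{thm:exact_linearization} with $A=\kappa$: the controller state $z=\varphi_{c}\subset\varphi_{[0,\kappa-1]}$ need not be propagated dynamically because (\ref{eq:new_coordinates_2}) expresses it directly through $(\zeta_{[-q_{1},-1]},x,v_{[0,R-\kappa]})$, and the specialization with $\#\kappa=n$ is exactly the paper's reading of property ii) of the preceding theorem. Your closing concern about consistency under the forward-shift operator is the right thing to flag, and it is settled by the fact that the identities (\ref{eq:new_coordinates_2}) and $F_{x}(y_{[1,R]})=f(F_{x}(y_{[0,R-1]}),F_{u}(y_{[0,R]}))$ hold identically along trajectories, as used in the proof of Theorem \ref{thm:exact_linearization}.
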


In contrast to a dynamic feedback (\ref{eq:Linearizing_Feedback})
with the controller state $z$, the feedback (\ref{eq:alternative_linearizing_feedback})
depends only on the state $x$ and past values $\zeta_{[-q_{1}]},\dots,\zeta_{[-1]}$
of the system trajectory. Since the values of $\zeta_{[-q_{1}]},\dots,\zeta_{[-1]}$
are available anyway from past measurements and/or past control inputs
(depending on the choice of $\zeta$, cf. (\ref{eq:sys_extension})),
the implementation of a feedback (\ref{eq:alternative_linearizing_feedback})
is straightforward. However, even though there is no dedicated controller
dynamics as in (\ref{eq:Linearizing_Feedback}), the required past
values $\zeta_{[-q_{1}]},\dots,\zeta_{[-1]}$ have to be stored. Thus,
the feedback (\ref{eq:alternative_linearizing_feedback}) can be considered
either as a special case of a dynamic feedback or a generalization
of the class of discrete-time quasi-static feedbacks (as they are
defined in \cite{ArandaKotta:2001}) to backward-shifts $\dots,\zeta_{[-2]},\zeta_{[-1]}$
of the system variables. A feedback of the special form (\ref{eq:quasistatic_feedback})
has been used in \cite{DiwoldKolarSchoberl:2022} for the exact linearization
of the discrete-time model of a gantry crane.
\begin{rem}
If the flat output (\ref{eq:flat_output_xu}) is
of the form $y=\varphi(x)$ -- i.e., independent of the input $u$
as well as past values $\zeta_{[-q_{1}]},\dots,\zeta_{[-1]}$ of the
system trajectory -- then the proposed procedure for the construction
of a minimal multi-index $\kappa$ is from a technical point of view
similar to the inversion algorithm stated e.g. in \cite{Kotta:1995},
see also \cite{KottaNijmeijer:1991} or \cite{Kotta:1990}. In the
inversion algorithm, which deals with the forward right-invertibility
of a system (\ref{eq:sys}) with a (not necessarily flat) output $y=\varphi(x)$
and possibly also $\dim(y)\neq\dim(u)$, the components of the output
are also shifted until the Jacobian matrices with respect to the input
variables meet certain rank conditions. However, it should be noted
that in every step of the inversion algorithm only one-fold shifts
of the components of the output are performed. Furthermore, it should
also be noted that the so-called invertibility indices computed by
the inversion algorithm are related to but not the same as the components
of the constructed minimal multi-index $\kappa$.
\end{rem}

\section{\label{sec:tracking_control}Tracking control design}

Like in the continuous-time case, the exact linearization can be used
as a first step in the design of a flatness-based tracking control.
For an exact linearization according to Theorem \ref{thm:exact_linearization}
with the choice $A=R$, which is always possible, the closed-loop
system has the form of a classical state representation. Thus, the
design of a tracking control is straightforward. For $A\leq R$, however,
the closed-loop system depends also on forward-shifts $v_{[0,R-A]}$
of the new input $v$. Thus, when designing a control law for $v$,
also the corresponding expressions for these forward-shifts have to
be derived. For a discussion of this problem in the continuous-time
case see e.g. \cite{DelaleauRudolph:1998}, \cite{Rudolph:2021},
or \cite{GstoettnerKolarSchoberl:2021}.

In the following, we demonstrate the design of a tracking control
for flat outputs of the form (\ref{eq:flat_output_xu}) and an exact
linearization by a feedback (\ref{eq:alternative_linearizing_feedback})
according to Corollary \ref{cor:quasistatic_linearization}. We assume
that the multi-index $\kappa$ which determines the new input $v$
has been constructed in accordance with the procedure of Section \ref{subsec:quasistatic_linearization_xu_flat},
and make use of the corresponding notation. With the control law
\begin{equation}
v_{i}^{j_{i}}=y_{i,[\kappa_{i}^{j_{i}}]}^{j_{i},d}-\sum_{\beta=0}^{\kappa_{i}^{j_{i}}-1}a_{i}^{j_{i},\beta}(y_{i,[\beta]}^{j_{i}}-y_{i,[\beta]}^{j_{i},d})\,,\quad j_{i}=1,\ldots,m_{i}\,,\quad i=1,\ldots,s\label{eq:stabilizing_feedback}
\end{equation}
for the exactly linearized system $y_{[\kappa]}=v$, the tracking
error $e_{i}^{j_{i}}=y_{i}^{j_{i}}-y_{i}^{j_{i},d}$ with respect
to an arbitrary reference trajectory $y_{i}^{j_{i},d}(k)$ is subject
to the tracking error dynamics
\begin{equation}
e_{i,[\kappa_{i}^{j_{i}}]}^{j_{i}}+\sum_{\beta=0}^{\kappa_{i}^{j_{i}}-1}a_{i}^{j_{i},\beta}e_{i,[\beta]}^{j_{i}}=0\,,\quad j_{i}=1,\ldots,m_{i}\,,\quad i=1,\ldots,s\,.\label{eq:tracking_error_dynamics}
\end{equation}
The eigenvalues of the $\sum_{i=1}^{s}m_{i}=m$ decoupled tracking
error systems (\ref{eq:tracking_error_dynamics}) can be placed arbitrarily
by a suitable choice of the coefficients $a_{i}^{j_{i},\beta}\in\mathbb{R}$.
The forward-shifts $v_{[0,R-\kappa]}$ that are needed in the linearizing
feedback (\ref{eq:alternative_linearizing_feedback}) can be determined
by shifting (\ref{eq:stabilizing_feedback}) and using $y_{i,[\kappa_{i}+\gamma]}^{j_{i}}=v_{i,[\gamma]}^{j_{i}}$,
$\gamma\geq0$, which leads to equations of the form
\begin{equation}
v_{i,[\gamma]}^{j_{i}}=y_{i,[\kappa_{i}^{j_{i}}+\gamma]}^{j_{i},d}-\sum_{\alpha=\kappa_{i}^{j_{i}}}^{\kappa_{i}^{j_{i}}-1+\gamma}a_{i}^{j_{i},\alpha-\gamma}(v_{i,[\alpha-\kappa_{i}^{j_{i}}]}^{j_{i}}-y_{i,[\alpha]}^{j_{i},d})-\sum_{\beta=\gamma}^{\kappa_{i}^{j_{i}}-1}a_{i}^{j_{i},\beta-\gamma}(y_{i,[\beta]}^{j_{i}}-y_{i,[\beta]}^{j_{i},d})\,.\label{eq:stabilizing_feedback_shifted}
\end{equation}
Since the future values $y_{[0,\kappa-1]}$ of the flat output which
appear in (\ref{eq:stabilizing_feedback}) and (\ref{eq:stabilizing_feedback_shifted})
are in general not available as measurements, we use again the expressions
(\ref{eq:new_coordinates_2}) and finally obtain the system of equations
\begin{align}
v_{1}^{j_{1}} & =y_{1,[\kappa_{1}^{j_{1}}]}^{j_{1},d}-\sum_{\beta=0}^{\kappa_{1}^{j_{1}}-1}a_{1}^{j_{1},\beta}(\varphi_{1,[\beta]}^{j_{1}}-y_{1,[\beta]}^{j_{1},d})\nonumber \\
v_{1,[1]}^{j_{1}} & =y_{1,[\kappa_{1}^{j_{1}}+1]}^{j_{1},d}-a_{1}^{j_{1},\kappa_{1}^{j_{1}}-1}(v_{1}^{j_{1}}-y_{1,[\kappa_{1}^{j_{1}}]}^{j_{1},d})-\sum_{\beta=1}^{\kappa_{1}^{j_{1}}-1}a_{1}^{j_{1},\beta-1}(\varphi_{1,[\beta]}^{j_{1}}-y_{1,[\beta]}^{j_{1},d})\nonumber \\
 & \:\:\vdots\nonumber \\
v_{1,[r_{1}^{j_{1}}-\kappa_{1}^{j_{1}}]}^{j_{1}} & =y_{1,[r_{1}^{j_{1}}]}^{j_{1},d}-\sum_{\alpha=\kappa_{1}^{j_{1}}}^{r_{1}^{j_{1}}-1}a_{1}^{j_{1},\alpha-r_{1}^{j_{1}}+\kappa_{1}^{j_{1}}}(v_{1,[\alpha-\kappa_{1}^{j_{1}}]}^{j_{1}}-y_{1,[\alpha]}^{j_{1},d})-\hspace{-8pt}\sum_{\beta=r_{1}^{j_{1}}-\kappa_{1}^{j_{1}}}^{\kappa_{1}^{j_{1}}-1}a_{1}^{j_{1},\beta-r_{1}^{j_{1}}+\kappa_{1}^{j_{1}}}(\varphi_{1,[\beta]}^{j_{1}}-y_{1,[\beta]}^{j_{1},d})\nonumber \\
 & \:\:\vdots\label{eq:final_equ_sys}\\
v_{s}^{j_{s}} & =y_{s,[\kappa_{s}^{j_{s}}]}^{j_{s},d}-\sum_{\beta=0}^{\kappa_{s}^{j_{s}}-1}a_{s}^{j_{s},\beta}(\varphi_{s,[\beta]}^{j_{s}}-y_{s,[\beta]}^{j_{s},d})\nonumber \\
v_{s,[1]}^{j_{s}} & =y_{s,[\kappa_{s}^{j_{s}}+1]}^{j_{s},d}-a_{s}^{j_{s},\kappa_{s}^{j_{s}}-1}(v_{s}^{j_{s}}-y_{s,[\kappa_{s}^{j_{s}}]}^{j_{s},d})-\sum_{\beta=1}^{\kappa_{s}^{j_{s}}-1}a_{s}^{j_{s},\beta-1}(\varphi_{s,[\beta]}^{j_{s}}-y_{s,[\beta]}^{j_{s},d})\nonumber \\
 & \:\:\vdots\nonumber \\
v_{s,[r_{s}^{j_{s}}-\kappa_{s}^{j_{s}}]}^{j_{s}} & =y_{s,[r_{s}^{j_{s}}]}^{j_{s},d}-\sum_{\alpha=\kappa_{s}^{j_{s}}}^{r_{s}^{j_{s}}-1}a_{s}^{j_{s},\alpha-r_{s}^{j_{s}}+\kappa_{s}^{j_{s}}}(v_{s,[\alpha-\kappa_{s}^{j_{s}}]}^{j_{s}}-y_{s,[\alpha]}^{j_{s},d})-\hspace{-8pt}\sum_{\beta=r_{s}^{j_{s}}-\kappa_{s}^{j_{s}}}^{\kappa_{s}^{j_{s}}-1}a_{s}^{j_{s},\beta-r_{s}^{j_{s}}+\kappa_{s}^{j_{s}}}(\varphi_{s,[\beta]}^{j_{s}}-y_{s,[\beta]}^{j_{s},d})\,.\nonumber 
\end{align}
Because of the triangular dependence of the functions $\varphi_{1,[0,\kappa_{1}-1]},\varphi_{2,[0,\kappa_{2}-1]},\ldots$
of (\ref{eq:new_coordinates_2}) on the variables $v_{1,[0,r_{1}-\kappa_{1}-1]}$,
$v_{2,[0,r_{2}-\kappa_{2}-1]}$, $\ldots$, the equations (\ref{eq:final_equ_sys})
have a triangular structure and can be solved systematically from
top to bottom for all elements of $v_{[0,R-\kappa]}$ as a function
of $\zeta_{[-q_{1},-1]}$, $x$, and the reference trajectory $y_{[0,R]}^{d}$,
i.e.,
\begin{equation}
v_{[0,R-\kappa]}=\rho(\zeta_{[-q_{1},-1]},x,y_{[0,R]}^{d})\,.\label{eq:final_equ_sys_sol}
\end{equation}
Substituting (\ref{eq:final_equ_sys_sol}) into the linearizing feedback
(\ref{eq:alternative_linearizing_feedback}) yields a tracking control
law of the form 
\begin{align*}
u & =\eta(\zeta_{[-q_{1},-1]},x,y_{[0,R]}^{d})\,.
\end{align*}
Besides the known reference trajectory $y_{[0,R]}^{d}$, this tracking
control law depends like the linearizing feedback (\ref{eq:alternative_linearizing_feedback})
only on the state $x$ and past values $\zeta_{[-q_{1}]},\dots,\zeta_{[-1]}$
of the system trajectory. Thus, if the state $x$ of the system (\ref{eq:sys})
can be measured, an implementation is again straightforward.
\begin{rem}
If all coefficients $a_{i}^{j_{i},\beta}$ in the
control law (\ref{eq:stabilizing_feedback}) are set to zero, all
eigenvalues of the tracking error dynamics (\ref{eq:tracking_error_dynamics})
are located at the origin of the complex plane and a dead-beat control
is obtained. In this case, the system of equations (\ref{eq:final_equ_sys})
drastically simplifies. Because of $v_{i}^{j_{i}}=y_{i,[\kappa_{i}^{j_{i}}]}^{j_{i},d}$,
the forward-shifts of $v$ required in the linearizing feedback (\ref{eq:alternative_linearizing_feedback})
are simply higher-order forward-shifts of the reference trajectory
$y^{d}$.
\end{rem}

\section{\label{sec:example}Examples}

As already mentioned in the introduction, an important application
for the concept of discrete-time flatness are discretized continuous-time
systems. In the following, we illustrate our results by the discretized
models of a wheeled mobile robot and a 3DOF helicopter.

\subsection{\label{subsec:Wheeled_mobile_robot}Wheeled mobile robot}

As first example let us consider a wheeled mobile robot, which has
already been studied in the context of discrete-time dynamic feedback
linearization in \cite{OroscoVelascoAranda:2004} and \cite{Aranda-BricaireMoog:2008}.
The continuous-time system is given by
\begin{equation}
\begin{aligned}\dot{x}^{1} & =u^{1}\cos(x^{3})\\
\dot{x}^{2} & =u^{1}\sin(x^{3})\\
\dot{x}^{3} & =u^{2}\,,
\end{aligned}
\label{eq:mobile_robot_cont}
\end{equation}
and is also known as kinematic car model, see e.g. \cite{NijmeijervanderSchaft:1990}.
The state variables $x^{1}$ and $x^{2}$ describe the position and
$x^{3}$ the angle of the mobile robot. The control inputs $u^{1}$
and $u^{2}$ represent the translatory and the angular velocity. An
exact discretization of the system (\ref{eq:mobile_robot_cont}) with
the sampling time $T>0$ yields the discrete-time system
\begin{equation}
\begin{aligned}x^{1,+} & =x^{1}+u^{1}T\cos\left(x^{3}+u^{2}\tfrac{T}{2}\right)\tfrac{\sin\left(u^{2}\tfrac{T}{2}\right)}{u^{2}\tfrac{T}{2}}\\
x^{2,+} & =x^{2}+u^{1}T\sin\left(x^{3}+u^{2}\tfrac{T}{2}\right)\tfrac{\sin\left(u^{2}\tfrac{T}{2}\right)}{u^{2}\tfrac{T}{2}}\\
x^{3,+} & =x^{3}+u^{2}T\,,
\end{aligned}
\label{eq:mobile_robot_discrete_1}
\end{equation}
cf. \cite{OroscoVelascoAranda:2004} or \cite{Sira-RamirezRouchon:2003}.
With the input transformation
\begin{equation}
\begin{array}{ccl}
\bar{u}^{1} & = & 2u^{1}\tfrac{\sin\left(u^{2}\tfrac{T}{2}\right)}{u^{2}}\\
\bar{u}^{2} & = & x^{3}+u^{2}\tfrac{T}{2}\,,
\end{array}\label{eq:mobile_robot_input_transformation}
\end{equation}
this system can be simplified to
\begin{equation}
\begin{aligned}x^{1,+} & =x^{1}+\bar{u}^{1}\cos(\bar{u}^{2})\\
x^{2,+} & =x^{2}+\bar{u}^{1}\sin(\bar{u}^{2})\\
x^{3,+} & =2\bar{u}^{2}-x^{3}\,.
\end{aligned}
\label{eq:mobile_robot_discrete_2}
\end{equation}
As shown in \cite{KolarSchoberlDiwold:2019}, the system (\ref{eq:mobile_robot_discrete_2})
is not forward-flat. However, it is flat in the more general sense
of Definition \ref{def:Flatness}. With the choice
\[
\begin{aligned}\zeta^{1} & =x^{3}\\
\zeta^{2} & =x^{1}
\end{aligned}
\]
for the variables $\zeta$ according to (\ref{eq:sys_extension}),
a flat output is given by
\begin{equation}
y=(\zeta_{[-1]}^{1},x^{1}\sin\left(\tfrac{\zeta_{[-1]}^{1}+x^{3}}{2}\right)-x^{2}\cos\left(\tfrac{\zeta_{[-1]}^{1}+x^{3}}{2}\right))\,.\label{eq:mobile_robot_discrete_flat_output}
\end{equation}
The corresponding parameterization (\ref{eq:flat_parameterization})
of the system variables has the form
\begin{equation}
\begin{aligned}x & =F_{x}(y^{1},y^{2},\dots,y_{[2]}^{1},y_{[1]}^{2})\\
\bar{u} & =F_{\bar{u}}(y^{1},y^{2},\dots,y_{[3]}^{1},y_{[2]}^{2})\,.
\end{aligned}
\label{eq:mobile_robot_parameterization}
\end{equation}
Thus, the orders of the highest forward-shifts of the flat output
that appear in (\ref{eq:flat_parameterization}) are given by $R=(3,2)$,
and an exact linearization by a dynamic feedback which leads to an
input-output behaviour
\begin{equation}
\begin{array}{ccl}
y_{[3]}^{1} & = & v^{1}\\
y_{[2]}^{2} & = & v^{2}
\end{array}\label{eq:mobile_robot_dynamic_linearization}
\end{equation}
is possible with the standard approach discussed in \cite{DiwoldKolarSchoberl:2020}. 

In the following, we investigate whether also lower-order forward-shifts
of the flat output (\ref{eq:mobile_robot_discrete_flat_output}) can
be chosen as a new input. Since the flat output is of the form (\ref{eq:flat_output_xu}),
we can apply the procedure of Section \ref{subsec:quasistatic_linearization_xu_flat}
and use the corresponding notation. In the first step, both components
of the flat output have to be shifted until they depend explicitly
on the input $\bar{u}$. Because of
\begin{align*}
\varphi^{1} & =\zeta_{[-1]}^{1}\\
\delta(\varphi^{1}) & =x^{3}\\
\delta^{2}(\varphi^{1}) & =2\bar{u}^{2}-x^{3}
\end{align*}
and
\begin{align*}
\varphi^{2} & =x^{1}\sin\left(\tfrac{\zeta_{[-1]}^{1}+x^{3}}{2}\right)-x^{2}\cos\left(\tfrac{\zeta_{[-1]}^{1}+x^{3}}{2}\right)\\
\delta(\varphi^{2}) & =x^{1}\sin(\bar{u}^{2})-x^{2}\cos(\bar{u}^{2})\,,
\end{align*}
this is the case for the second and the first forward-shift, respectively.
Hence, we obtain $K_{1}=(2,1)$, and since $\varphi_{[K_{1}]}$ is
independent of $\bar{u}^{1}$ we clearly have $m_{1}=\mathrm{rank}(\partial_{\bar{u}}\varphi_{[K_{1}]})=1$.\footnote{In contrast to Section \ref{subsec:quasistatic_linearization_xu_flat},
for the sake of simplicity we do not renumber the components $\bar{u}^{1}$
and $\bar{u}^{2}$ of the input.} At this point, we can choose whether we introduce $\delta^{2}(\varphi^{1})$
or $\delta(\varphi^{2})$ as new input $v_{1}=v_{1}^{1}$.\footnote{To emphasize that $v_{1}$ could in general consist of more than one
component we write $v_{1}^{1}$.} In the following, we proceed with $\varphi_{1}=\varphi^{1}$ and
$\varphi_{rest_{1}}=\varphi^{2}$. Consequently, we get $\kappa_{1}=k_{1}^{1}=2$
and $K_{rest_{1}}=k_{1}^{2}=1$. After the coordinate transformation
\begin{align*}
v_{1}^{1} & =\varphi_{1,[2]}^{1}=2\bar{u}^{2}-x^{3}\\
v_{1,[1]}^{1} & =\varphi_{1,[3]}^{1}=2\bar{u}_{[1]}^{2}-2\bar{u}^{2}+x^{3}\,,\\
 & \:\:\vdots
\end{align*}
which replaces $\bar{u}^{2}$ and its forward-shifts by $v_{1}^{1}$
and its forward-shifts, we have
\begin{align*}
y_{1,[0,\kappa_{1}-1]} & =\begin{bmatrix}\varphi_{1}^{1}\\
\varphi_{1,[1]}^{1}
\end{bmatrix}=\begin{bmatrix}\zeta_{[-1]}^{1}\\
x^{3}
\end{bmatrix}\\
y_{1,[\kappa_{1}]} & =\varphi_{1,[2]}^{1}=v_{1}^{1}\\
y_{rest_{1},[0,K_{rest_{1}}-1]} & =\varphi_{rest_{1}}^{1}=x^{1}\sin\left(\tfrac{\zeta_{[-1]}^{1}+x^{3}}{2}\right)-x^{2}\cos\left(\tfrac{\zeta_{[-1]}^{1}+x^{3}}{2}\right)\\
y_{rest_{1},[K_{rest_{1}}]} & =\varphi_{rest_{1},[1]}^{1}=x^{1}\sin\left(\tfrac{x^{3}+v_{1}^{1}}{2}\right)-x^{2}\cos\left(\tfrac{x^{3}+v_{1}^{1}}{2}\right)\,.
\end{align*}
In the second step, we have to shift the remaining component $\varphi_{rest_{1}}=\varphi^{2}$
of the flat output until it depends explicitly on the remaining input
$\bar{u}^{1}$. This is the case for its second forward-shift
\begin{equation}
\delta^{2}(\varphi_{rest_{1}}^{1})=\left(x^{1}+\bar{u}^{1}\cos\left(\tfrac{x^{3}+v_{1}^{1}}{2}\right)\right)\sin\left(\tfrac{v_{1}^{1}+v_{1,[1]}^{1}}{2}\right)-\left(x^{2}+\bar{u}^{1}\sin\left(\tfrac{x^{3}+v_{1}^{1}}{2}\right)\right)\cos\left(\tfrac{v_{1}^{1}+v_{1,[1]}^{1}}{2}\right)\,.\label{eq:mobile_robot_varphi_K2}
\end{equation}
Because of $\mathrm{rank}(\partial_{\bar{u}^{1}}\varphi_{rest_{1},[2]}^{1})=\dim(\bar{u}^{1})=1$,
the procedure terminates with $\varphi_{2}=\varphi_{rest_{1}}$ and
$\kappa_{2}=K_{2}=2$. After introducing (\ref{eq:mobile_robot_varphi_K2})
as new input $v_{2}=v_{2}^{1}$, the forward-shifts of the flat output
(\ref{eq:mobile_robot_discrete_flat_output}) up to the orders $\kappa=(\kappa_{1},\kappa_{2})=(2,2)$
are given by
\begin{align}
y_{1,[0,\kappa_{1}-1]} & =\begin{bmatrix}\varphi_{1}^{1}\\
\varphi_{1,[1]}^{1}
\end{bmatrix}=\begin{bmatrix}\zeta_{[-1]}^{1}\\
x^{3}
\end{bmatrix}\nonumber \\
y_{1,[\kappa_{1}]} & =\varphi_{1,[2]}^{1}=v_{1}^{1}\nonumber \\
y_{2,[0,\kappa_{2}-1]} & =\begin{bmatrix}\varphi_{2}^{1}\\
\varphi_{2,[1]}^{1}
\end{bmatrix}=\begin{bmatrix}x^{1}\sin\left(\tfrac{\zeta_{[-1]}^{1}+x^{3}}{2}\right)-x^{2}\cos\left(\tfrac{\zeta_{[-1]}^{1}+x^{3}}{2}\right)\\
x^{1}\sin\left(\frac{x^{3}+v_{1}^{1}}{2}\right)-x^{2}\cos\left(\frac{x^{3}+v_{1}^{1}}{2}\right)
\end{bmatrix}\label{eq:mobile_robot_new coordinates}\\
y_{2,[\kappa_{2}]} & =\varphi_{2,[2]}^{1}=v_{2}^{1}\,.\nonumber 
\end{align}
Substituting (\ref{eq:mobile_robot_new coordinates}) as well as $y_{1,[3]}^{1}=v_{1,[1]}^{1}$
into the parameterization (\ref{eq:mobile_robot_parameterization})
of the control input $\bar{u}$ yields a feedback of the form
\begin{equation}
\bar{u}=F_{\bar{u}}\circ\phi(\zeta_{[-1]}^{1},x^{1},x^{2},x^{3},v_{1}^{1},v_{1,[1]}^{1},v_{2}^{1})\label{eq:mobile_robot_linearizing_feedback}
\end{equation}
such that the input-output behaviour of the closed-loop system is
given by
\begin{equation}
\begin{array}{ccl}
y_{1,[2]}^{1} & = & v_{1}^{1}\\
y_{2,[2]}^{1} & = & v_{2}^{1}\,,
\end{array}\label{eq:mobile_robot_exactly_linearized}
\end{equation}
cf. Corollary \ref{cor:quasistatic_linearization}. Thus, in contrast
to (\ref{eq:mobile_robot_dynamic_linearization}), we can actually
use the second instead of the third forward-shift of the first component
of the flat output as a new input.

For the exactly linearized system (\ref{eq:mobile_robot_exactly_linearized}),
the design of a tracking control is now straightforward. The control
law
\begin{align*}
v_{1}^{1} & =y_{1,[2]}^{1,d}-a_{1}^{1,1}(y_{1,[1]}^{1}-y_{1,[1]}^{1,d})-a_{1}^{1,0}(y_{1}^{1}-y_{1}^{1,d})\\
v_{2}^{1} & =y_{2,[2]}^{1,d}-a_{2}^{1,1}(y_{2,[1]}^{1}-y_{2,[1]}^{1,d})-a_{2}^{1,0}(y_{2}^{1}-y_{2}^{1,d})
\end{align*}
results in the linear tracking error dynamics
\begin{align*}
e_{1,[2]}^{1}+a_{1}^{1,1}e_{1,[1]}^{1}+a_{1}^{1,0}e_{1}^{1} & =0\\
e_{2,[2]}^{1}+a_{2}^{1,1}e_{2,[1]}^{1}+a_{2}^{1,0}e_{2}^{1} & =0
\end{align*}
with a total order of $\#\kappa=2+2=4$ instead of $\#R=3+2=5$ with
the standard approach. Since the linearizing feedback (\ref{eq:mobile_robot_linearizing_feedback})
depends also on $v_{1,[1]}^{1}$, the system of equations (\ref{eq:final_equ_sys})
is given by
\begin{align*}
v_{1}^{1} & =y_{1,[2]}^{1,d}-a_{1}^{1,1}(\varphi_{1,[1]}^{1}-y_{1,[1]}^{1,d})-a_{1}^{1,0}(\varphi_{1}^{1}-y_{1}^{1,d})\\
v_{1,[1]}^{1} & =y_{1,[3]}^{1,d}-a_{1}^{1,1}(v_{1}^{1}-y_{1,[2]}^{1,d})-a_{1}^{1,0}(\varphi_{1,[1]}^{1}-y_{1,[1]}^{1,d})\\
v_{2}^{1} & =y_{2,[2]}^{1,d}-a_{2}^{1,1}(\varphi_{2,[1]}^{1}-y_{2,[1]}^{1,d})-a_{2}^{1,0}(\varphi_{2}^{1}-y_{2}^{1,d})
\end{align*}
with the functions $\varphi_{1}^{1},\varphi_{1,[1]}^{1},\varphi_{2}^{1},\varphi_{2,[1]}^{1}$
according to (\ref{eq:mobile_robot_new coordinates}). This system
of equations can be solved from top to bottom for $v_{1}^{1},v_{1,[1]}^{1}$,
and $v_{2}^{1}$ as a function of $\zeta_{[-1]}^{1}$, $x$, and the
reference trajectory $y_{[0,R]}^{d}$. Substituting the solution into
the linearizing feedback (\ref{eq:mobile_robot_linearizing_feedback})
yields a control law of the form
\begin{align*}
\bar{u}^{1} & =\eta^{1}(\zeta_{[-1]}^{1},x^{1},x^{2},x^{3},y_{1,[0,3]}^{1,d},y_{2,[0,2]}^{1,d})\\
\bar{u}^{2} & =\eta^{2}(\zeta_{[-1]}^{1},x^{1},x^{2},x^{3},y_{1,[0,3]}^{1,d},y_{2,[0,2]}^{1,d})\,.
\end{align*}
With the inverse of the input transformation (\ref{eq:mobile_robot_input_transformation}),
the corresponding control law for the system (\ref{eq:mobile_robot_discrete_1})
or (\ref{eq:mobile_robot_cont}) with the original inputs $u^{1}$
and $u^{2}$ follows. The presence of the variable $\zeta_{[-1]}^{1}$
is no obstacle for a practical implementation, since it simply represents
a past value of $x^{3}$.

\subsection{3DOF helicopter}

As a second example, let us consider the three-degrees-of-freedom
helicopter laboratory experiment of \cite{KieferKugiKemmetmueller:2004}.
The continuous-time system is given by
\begin{equation}
\begin{array}{ccl}
\dot{q}^{1} & = & \omega^{1}\\
\dot{q}^{2} & = & \omega^{2}\\
\dot{q}^{3} & = & \omega^{3}\\
\dot{\omega}^{1} & = & b_{1}\cos(q^{2})\sin(q^{3})u^{1}\\
\dot{\omega}^{2} & = & a_{1}\sin(q^{2})+a_{2}\cos(q^{2})+b_{2}\cos(q^{3})u^{1}\\
\dot{\omega}^{3} & = & a_{3}\cos(q^{2})\sin(q^{3})+b_{3}u^{2}
\end{array}\label{eq:helicopter_cont}
\end{equation}
with the travel angle $q^{1}$, the elevation angle $q^{2}$, and
the pitch angle $q^{3}$ as well as the corresponding angular velocities
$\omega^{1}$, $\omega^{2}$, and $\omega^{3}$. The control inputs
$u^{1}$ and $u^{2}$ are the sum and the difference of the thrusts
of the two propellers. The constant coefficients $a_{1}$, $a_{2}$,
$a_{3}$ and $b_{1},$ $b_{2}$ depend on the masses and the geometric
parameters. As shown in \cite{KieferKugiKemmetmueller:2004}, the
system (\ref{eq:helicopter_cont}) is flat and a flat output is given
by\footnote{The components of the flat output are already sorted in such a way
that we do not need a renumbering during our calculations.}
\begin{equation}
y=(q^{2},q^{1})\,.\label{eq:helicopter_flat_output}
\end{equation}
Since the system equations are significantly more complex than those
of the wheeled mobile robot of Section \ref{subsec:Wheeled_mobile_robot},
instead of an exact discretization we perform an approximate discretization
with the Euler-method. The resulting system
\begin{equation}
\begin{array}{ccl}
q^{1,+} & = & q^{1}+T\omega^{1}\\
q^{2,+} & = & q^{2}+T\omega^{2}\\
q^{3,+} & = & q^{3}+T\omega^{3}\\
\omega^{1,+} & = & \omega^{1}+Tb_{1}\cos(q^{2})\sin(q^{3})u^{1}\\
\omega^{2,+} & = & \omega^{2}+T\left(a_{1}\sin(q^{2})+a_{2}\cos(q^{2})+b_{2}\cos(q^{3})u^{1}\right)\\
\omega^{3,+} & = & \omega^{3}+T\left(a_{3}\cos(q^{2})\sin(q^{3})+b_{3}u^{2}\right)
\end{array}\label{eq:helicopter_disc}
\end{equation}
is forward-flat, and the flat output (\ref{eq:helicopter_flat_output})
is preserved.\footnote{Neither an exact nor an approximate discretization necessarily preserve
the flatness or the static feedback linearizability of a continuous-time
system, see e.g. \cite{DiwoldKolarSchoberl:2020}, \cite{DiwoldKolarSchoberl:2022},
or \cite{Grizzle:1986} for a further discussion.} This can be checked either with the systematic tests proposed in
\cite{KolarSchoberlDiwold:2019} and \cite{KolarDiwoldSchoberl:2019},
or by simply verifying that all state- and input variables of (\ref{eq:helicopter_disc})
can indeed be expressed by (\ref{eq:helicopter_flat_output}) and
its forward-shifts. The corresponding parameterization (\ref{eq:flat_parameterization})
is of the form
\begin{equation}
\begin{aligned}x & =F_{x}(y^{1},y^{2},\dots,y_{[3]}^{1},y_{[3]}^{2})\\
u & =F_{u}(y^{1},y^{2},\dots,y_{[4]}^{1},y_{[4]}^{2})
\end{aligned}
\label{eq:helicopter_parameterization}
\end{equation}
with the orders of the highest required forward-shifts given by $R=(4,4)$.
Thus, an exact linearization with a dynamic feedback according to
the standard approach discussed in \cite{DiwoldKolarSchoberl:2020}
would lead to an input-output behaviour
\begin{equation}
\begin{array}{ccl}
y_{[4]}^{1} & = & v^{1}\\
y_{[4]}^{2} & = & v^{2}\,.
\end{array}\label{eq:helicopter_dynamic_linearization}
\end{equation}

Since the flat output (\ref{eq:helicopter_flat_output}) is of the
form (\ref{eq:flat_output_xu}), we can again apply the procedure
of Section \ref{subsec:quasistatic_linearization_xu_flat} in order
to determine whether also lower-order forward-shifts can be used as
new input. In the first step, both components of the flat output have
to be shifted until they depend explicitly on the input $u$. Because
of
\begin{align*}
\varphi^{1} & =q^{2}\\
\delta(\varphi^{1}) & =q^{2}+T\omega^{2}\\
\delta^{2}(\varphi^{1}) & =q^{2}+2T\omega^{2}+T^{2}\left(a_{1}\sin(q^{2})+a_{2}\cos(q^{2})+b_{2}\cos(q^{3})u^{1}\right)
\end{align*}
and
\begin{align*}
\varphi^{2} & =q^{1}\\
\delta(\varphi^{2}) & =q^{1}+T\omega^{1}\\
\delta^{2}(\varphi^{2}) & =q^{1}+2T\omega^{1}+T^{2}b_{1}\cos(q^{2})\sin(q^{3})u^{1}\,,
\end{align*}
this is the case for the second forward-shifts. Hence, we obtain $K_{1}=(2,2)$,
and since $\varphi_{[K_{1}]}$ is independent of $u^{2}$ we obviously
have $m_{1}=\mathrm{rank}(\partial_{u}\varphi_{[K_{1}]})=1$. Again,
we can choose whether we introduce $\delta^{2}(\varphi^{1})$ or $\delta^{2}(\varphi^{2})$
as new input $v_{1}=v_{1}^{1}$. In the following we proceed with
$\varphi_{1}=\varphi^{1}$ and $\varphi_{rest_{1}}=\varphi^{2}$,
and get $\kappa_{1}=k_{1}^{1}=2$ as well as $K_{rest_{1}}=k_{1}^{2}=2$.
The other choice would lead to a feedback with a singularity for $q^{3}=0$,
which is not suitable for a practical application since the pitch
angle $q^{3}$ is zero in an equilibrium position. After the coordinate
transformation
\begin{align*}
v_{1}^{1} & =\varphi_{1,[2]}^{1}=q^{2}+2T\omega^{2}+T^{2}\left(a_{1}\sin(q^{2})+a_{2}\cos(q^{2})+b_{2}\cos(q^{3})u^{1}\right)\\
v_{1,[1]}^{1} & =\varphi_{1,[3]}^{1}(q^{2},q^{3},\omega^{2},\omega^{3},u^{1},u_{[1]}^{1})\,,\\
 & \:\:\vdots
\end{align*}
which replaces $u^{1}$ and its forward-shifts by $v_{1}^{1}$ and
its forward-shifts, we have
\begin{align*}
y_{1,[0,\kappa_{1}-1]} & =\begin{bmatrix}\varphi_{1}^{1}\\
\varphi_{1,[1]}^{1}
\end{bmatrix}=\begin{bmatrix}q^{2}\\
q^{2}+T\omega^{2}
\end{bmatrix}\\
y_{1,[\kappa_{1}]} & =\varphi_{1,[2]}^{1}=v_{1}^{1}\\
y_{rest_{1},[0,K_{rest_{1}}-1]} & =\begin{bmatrix}\varphi_{rest_{1}}^{1}\\
\varphi_{rest_{1},[1]}^{1}
\end{bmatrix}=\begin{bmatrix}q^{1}\\
q^{1}+T\omega^{1}
\end{bmatrix}\\
y_{rest_{1},[K_{rest_{1}}]} & =\varphi_{rest_{1},[2]}^{1}=q^{1}+2T\omega^{1}\\
 & \hphantom{=\varphi_{rest_{1},[2]}^{1}=}-\tfrac{b_{1}\cos(q^{2})\tan(q^{3})}{b_{2}}\left(q^{2}-v_{1}^{1}+2T\omega^{2}+T^{2}\left(a_{1}\sin(q^{2})+a_{2}\cos(q^{2})\right)\right)\,.
\end{align*}
In the second step, we have to shift the remaining component $\varphi_{rest_{1}}=\varphi^{2}$
of the flat output until it depends explicitly on the remaining input
$u^{2}$. This is the case for its fourth forward-shift
\begin{equation}
\delta^{4}(\varphi_{rest_{1}}^{1})=\varphi_{rest_{1},[4]}^{1}(q^{1},q^{2},q^{3},\omega^{1},\omega^{2},\omega^{3},v_{1}^{1},v_{1,[1]}^{1},v_{1,[2]}^{1},u^{2})\,.\label{eq:helicopter_varphi_K2}
\end{equation}
Because of $\mathrm{rank}(\partial_{u^{2}}\varphi_{rest_{1},[4]}^{1})=\dim(u^{2})=1$,
the procedure terminates with $\varphi_{2}=\varphi_{rest_{1}}$ and
$\kappa_{2}=K_{2}=4$. After introducing (\ref{eq:helicopter_varphi_K2})
as new input $v_{2}=v_{2}^{1}$, the forward-shifts of the flat output
(\ref{eq:helicopter_flat_output}) up to the orders $\kappa=(\kappa_{1},\kappa_{2})=(2,4)$
are given by
\begin{align}
y_{1,[0,\kappa_{1}-1]} & =\begin{bmatrix}\varphi_{1}^{1}\\
\varphi_{1,[1]}^{1}
\end{bmatrix}=\begin{bmatrix}q^{2}\\
q^{2}+T\omega^{2}
\end{bmatrix}\nonumber \\
y_{1,[\kappa_{1}]} & =\varphi_{1,[2]}^{1}=v_{1}^{1}\nonumber \\
y_{2,[0,\kappa_{2}-1]} & =\begin{bmatrix}\varphi_{2}^{1}\\
\varphi_{2,[1]}^{1}\\
\varphi_{2,[2]}^{1}\\
\varphi_{2,[3]}^{1}
\end{bmatrix}=\begin{bmatrix}q^{1}\\
q^{1}+T\omega^{1}\\
\varphi_{2,[2]}^{1}(q^{1},q^{2},q^{3},\omega^{1},\omega^{2},v_{1}^{1})\\
\varphi_{2,[3]}^{1}(q^{1},q^{2},q^{3},\omega^{1},\omega^{2},\omega^{3},v_{1}^{1},v_{1,[1]}^{1})
\end{bmatrix}\label{eq:helicopter_new coordinates}\\
y_{2,[\kappa_{2}]} & =\varphi_{2,[4]}^{1}=v_{2}^{1}\,.\nonumber 
\end{align}
Substituting (\ref{eq:helicopter_new coordinates}) as well as $y_{1,[3]}^{1}=v_{1,[1]}^{1}$
and $y_{1,[4]}^{1}=v_{1,[2]}^{1}$ into the parameterization (\ref{eq:helicopter_parameterization})
of the control input $u$ yields a feedback of the form
\begin{equation}
u=F_{u}\circ\phi(q^{1},q^{2},q^{3},\omega^{1},\omega^{2},\omega^{3},v_{1}^{1},v_{1,[1]}^{1},v_{1,[2]}^{1},v_{2}^{1})\label{eq:helicopter_linearizing_feedback}
\end{equation}
such that the input-output behaviour of the closed-loop system is
given by
\begin{equation}
\begin{array}{ccl}
y_{1,[2]}^{1} & = & v_{1}^{1}\\
y_{2,[4]}^{1} & = & v_{2}^{1}\,.
\end{array}\label{eq:helicopter_exactly_linearized}
\end{equation}
Thus, in contrast to (\ref{eq:helicopter_dynamic_linearization}),
we can actually use the second instead of the fourth forward-shift
of the first component of the flat output as a new input. Since the
linearizing feedback (\ref{eq:helicopter_linearizing_feedback}) does
not depend on backward-shifts of the system variables, it is a discrete-time
quasi-static feedback in the sense of \cite{ArandaKotta:2001} --
see the discussion after Corollary \ref{cor:quasistatic_linearization}.

For the exactly linearized system (\ref{eq:helicopter_exactly_linearized}),
the design of a tracking control is again straightforward. The control
law
\begin{align*}
v_{1}^{1} & =y_{1,[2]}^{1,d}-a_{1}^{1,1}(y_{1,[1]}^{1}-y_{1,[1]}^{1,d})-a_{1}^{1,0}(y_{1}^{1}-y_{1}^{1,d})\\
v_{2}^{1} & =y_{2,[4]}^{1,d}-\sum_{\beta=0}^{3}a_{2}^{1,\beta}(y_{2,[\beta]}^{1}-y_{2,[\beta]}^{1,d})
\end{align*}
results in the linear tracking error dynamics
\begin{align*}
e_{1,[2]}^{1}+a_{1}^{1,1}e_{1,[1]}^{1}+a_{1}^{1,0}e_{1}^{1} & =0\\
e_{2,[4]}^{1}+\sum_{\beta=0}^{3}a_{2}^{1,\beta}e_{2,[\beta]}^{1} & =0
\end{align*}
with a total order of $\#\kappa=2+4=6$ instead of $\#R=4+4=8$. The
system of equations (\ref{eq:final_equ_sys}) is given by
\begin{align*}
v_{1}^{1} & =y_{1,[2]}^{1,d}-a_{1}^{1,1}(\varphi_{1,[1]}^{1}-y_{1,[1]}^{1,d})-a_{1}^{1,0}(\varphi_{1}^{1}-y_{1}^{1,d})\\
v_{1,[1]}^{1} & =y_{1,[3]}^{1,d}-a_{1}^{1,1}(v_{1}^{1}-y_{1,[2]}^{1,d})-a_{1}^{1,0}(\varphi_{1,[1]}^{1}-y_{1,[1]}^{1,d})\\
v_{1,[2]}^{1} & =y_{1,[4]}^{1,d}-a_{1}^{1,1}(v_{1,[1]}^{1}-y_{1,[3]}^{1,d})-a_{1}^{1,0}(v_{1}^{1}-y_{1,[2]}^{1,d})\\
v_{2}^{1} & =y_{2,[4]}^{1,d}-\sum_{\beta=0}^{3}a_{2}^{1,\beta}(\varphi_{2,[\beta]}^{1}-y_{2,[\beta]}^{1,d})
\end{align*}
with the functions $\varphi_{1}^{1},\varphi_{1,[1]}^{1},\varphi_{2}^{1},\varphi_{2,[1]}^{1},\varphi_{2,[2]}^{1},\varphi_{2,[3]}^{1}$
according to (\ref{eq:helicopter_new coordinates}), and can be solved
from top to bottom for $v_{1}^{1},v_{1,[1]}^{1},v_{1,[2]}^{1}$, and
$v_{2}^{1}$ as a function of $q^{1},q^{2},q^{3},\omega^{1},\omega^{2},\omega^{3}$
and the reference trajectory $y_{[0,R]}^{d}$. Substituting the solution
into the linearizing feedback (\ref{eq:helicopter_linearizing_feedback})
finally yields a control law of the form
\begin{align*}
u^{1} & =\eta^{1}(q^{1},q^{2},q^{3},\omega^{1},\omega^{2},\omega^{3},y_{1,[0,4]}^{1,d},y_{2,[0,4]}^{1,d})\\
u^{2} & =\eta^{2}(q^{1},q^{2},q^{3},\omega^{1},\omega^{2},\omega^{3},y_{1,[0,4]}^{1,d},y_{2,[0,4]}^{1,d})\,.
\end{align*}

\section{Conclusion}

In this contribution we have investigated the exact linearization
of flat discrete-time systems. Since an exact linearization can always
be achieved by choosing the highest forward-shifts of the flat output
in (\ref{eq:flat_parameterization}) as new input $v=y_{[R]}$, the
point of departure of our considerations was the question whether
also lower-order forward-shifts $v=y_{[A]}$ with $A\leq R$ can be
used. Similar to the continuous-time case, this allows e.g. to achieve
a lower-order error dynamics for a subsequently designed tracking
control. Concerning the choice of a feasible new input $v=y_{[A]}$,
we have derived conditions which are formulated in terms of the linear
independence of certain differentials and can be checked in a straightforward
way. Furthermore, we have shown how the new input $v$ can be introduced
by a suitable dynamic feedback. For the practically quite important
case of flat outputs (\ref{eq:flat_output_xu}) which do not depend
on future values of the control input, we have shown how to construct
a minimal multi-index $\kappa$ such that $v=y_{[\kappa]}$ is a feasible
input and $\#\kappa\leq\#A$ for all other feasible inputs $v=y_{[A]}$.
Such an input $v=y_{[\kappa]}$ can be introduced by a feedback (\ref{eq:alternative_linearizing_feedback})
which depends only on the state $x$ as well as past values of the
system variables. This is particularly convenient for an implementation,
since past values of the system variables are available anyway from
past measurements or past inputs, which only need to be stored. Moreover,
we have shown that such an exact linearization can be used as a basis
for the design of a tracking control law which again only depends
on $x$ as well as past values of the system variables and the reference
trajectory. To illustrate our results, we have computed tracking control
laws for the discretized models of a wheeled mobile robot and a 3DOF
helicopter.

\bibliographystyle{apacite}
\bibliography{Bibliography_Bernd}

\appendix

\section{Supplements}

The following lemma addresses the relation between the functional
independence of functions and the linear independence of their differentials.
\begin{lem}
\label{lem:functional_independence}Consider a set of smooth functions
$g^{1},\ldots,g^{k}$ as well as another smooth function $h$ which
are all defined on the same manifold. Then
\begin{equation}
\mathrm{d}h\in\mathrm{span}\{\mathrm{d}g^{1},\ldots,\mathrm{d}g^{k}\}\label{eq:dh_in_span_dg}
\end{equation}
is equivalent to the existence of a smooth function $\psi:\mathbb{R}^{k}\mapsto\mathbb{R}$
such that locally 
\begin{align}
h & =\psi(g^{1},\ldots,g^{k})\label{eq:h_function_of_g}
\end{align}
holds identically. If the differentials $\mathrm{d}g^{1},\ldots,\mathrm{d}g^{k}$
are linearly independent, then the function $\psi$ is unique.
\end{lem}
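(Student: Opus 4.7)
The plan is to split into three parts: the easy direction via the chain rule, the nontrivial direction via the constant rank theorem, and finally uniqueness via the submersion/open-map property. Throughout I work locally at a point $p\in\mathcal{X}$ and shrink the neighborhood as needed.

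The direction $h=\psi(g^{1},\ldots,g^{k}) \Rightarrow \mathrm{d}h\in\mathrm{span}\{\mathrm{d}g^{1},\ldots,\mathrm{d}g^{k}\}$ is immediate: differentiating and applying the chain rule gives $\mathrm{d}h=\sum_{i=1}^{k}(\partial_{i}\psi)(g^{1},\ldots,g^{k})\,\mathrm{d}g^{i}$, which is an element of the $C^{\infty}(\mathcal{X})$-span by definition.

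For the converse, first reduce to a neighborhood on which the pointwise rank $\ell$ of $\{\mathrm{d}g^{1},\ldots,\mathrm{d}g^{k}\}$ is locally constant (take $p$ generic, so that $p$ lies in the open set where the rank attains its local maximum). After reordering, assume $\mathrm{d}g^{1}|_{p},\ldots,\mathrm{d}g^{\ell}|_{p}$ are linearly independent. By the submersion theorem, there exist local coordinates $(y^{1},\ldots,y^{n})$ centered at $p$ with $g^{i}=y^{i}$ for $i=1,\ldots,\ell$. For $i>\ell$, the relation $\mathrm{d}g^{i}\in\mathrm{span}\{\mathrm{d}g^{1},\ldots,\mathrm{d}g^{\ell}\}=\mathrm{span}\{\mathrm{d}y^{1},\ldots,\mathrm{d}y^{\ell}\}$ gives $\partial_{y^{j}}g^{i}=0$ for $j>\ell$, so $g^{i}$ depends only on $y^{1},\ldots,y^{\ell}$, i.e., $g^{i}=\chi^{i}(g^{1},\ldots,g^{\ell})$. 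Now the hypothesis $\mathrm{d}h\in\mathrm{span}\{\mathrm{d}g^{1},\ldots,\mathrm{d}g^{k}\}$ reduces to $\mathrm{d}h\in\mathrm{span}\{\mathrm{d}y^{1},\ldots,\mathrm{d}y^{\ell}\}$, so $\partial_{y^{j}}h=0$ for $j>\ell$, and hence $h=\tilde{\psi}(y^{1},\ldots,y^{\ell})=\tilde{\psi}(g^{1},\ldots,g^{\ell})$. Setting $\psi(g^{1},\ldots,g^{k}):=\tilde{\psi}(g^{1},\ldots,g^{\ell})$ yields the required representation.

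For uniqueness under the additional hypothesis that $\mathrm{d}g^{1},\ldots,\mathrm{d}g^{k}$ are linearly independent at $p$, observe that the map $G:=(g^{1},\ldots,g^{k}):\mathcal{X}\to\mathbb{R}^{k}$ then has rank $k$ at $p$ and is therefore a submersion on a neighborhood $U$ of $p$, so $G(U)$ is an open neighborhood of $G(p)$ in $\mathbb{R}^{k}$. If $\psi_{1},\psi_{2}$ both satisfy $h=\psi_{i}\circ G$ on $U$, then $\psi_{1}-\psi_{2}$ vanishes on the open set $G(U)$, which forces $\psi_{1}=\psi_{2}$ as germs at $G(p)$.

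The only delicate point — and hence the main obstacle — is the rank-constancy assumption in the second paragraph: without it, one cannot extract $g^{1},\ldots,g^{\ell}$ as straightened coordinates. The standard fix is to restrict to the dense open set of points where the rank is locally constant (equivalently, locally maximal) and state the conclusion as a local, generic-point assertion; this is consistent with the paper's use of the lemma, which is always at regular points where the relevant differentials form a clean pointwise basis.
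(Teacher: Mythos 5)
Your proposal is correct and follows essentially the same route as the paper's own proof: select a maximal pointwise-independent subset $\mathrm{d}g^{1},\ldots,\mathrm{d}g^{\ell}$, straighten $g^{1},\ldots,g^{\ell}$ into coordinates, observe that the remaining $g^{i}$ and $h$ then depend only on those coordinates, and obtain uniqueness when $\ell=k$ because the $g^{i}$ form a full coordinate chart (equivalently, $G=(g^{1},\ldots,g^{k})$ is a submersion with open image). Your explicit treatment of the local rank-constancy issue and the germ-level uniqueness argument are minor refinements of details the paper leaves implicit, not a different method.
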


\begin{proof}
Let $l\leq k$ denote the maximal number of linearly independent differentials
from the set $\{\mathrm{d}g^{1},\ldots,\mathrm{d}g^{k}\}$, and assume
that these differentials are given by $\mathrm{d}g^{1},\ldots,\mathrm{d}g^{l}$
(which is always possible by a renumbering). Then the functions $g^{1},\ldots,g^{l}$
can be introduced as (a part of the) coordinates
\[
z^{i}=g^{i}\,,\quad i=1,\ldots,l
\]
on the considered manifold. Moreover, by construction, also the functions
$g^{l+1},\ldots,g^{k}$ can depend only on the coordinates $z$. With
such coordinates, (\ref{eq:dh_in_span_dg}) is equivalent to
\begin{equation}
\mathrm{d}h\in\mathrm{span}\{\mathrm{d}z^{1},\ldots,\mathrm{d}z^{l}\}\,.\label{eq:dh_in_span_dz}
\end{equation}
Thus, the function $h$ can only depend on $z^{1},\ldots,z^{l}$,
and hence be written in original coordinates as
\[
h=\psi(g^{1},\ldots,g^{l})\,.
\]
Conversely, it is clear that
\[
h=\psi(z^{1},\ldots,z^{l},g^{l+1}(z),\ldots,g^{k}(z))
\]
implies (\ref{eq:dh_in_span_dz}) and hence (\ref{eq:dh_in_span_dg}).
In the case $l=k$, all functions $g^{1},\ldots,g^{k}$ can be introduced
as new coordinates, and no choice of $l$ independent ones is necessary.
Thus, the representation (\ref{eq:h_function_of_g}) is then unique.
\end{proof}

\end{document}